\def\mod#1{{\ifmmode\text{\rm\ (mod~$#1$)}
\else\discretionary{}{}{\hbox{ }}\rm(mod~$#1$)\fi}}
\newcommand{\pdiv}{\mid\!\mid}
\newtheorem{theorem}{Theorem}[section]
\newtheorem{lemma}[theorem]{Lemma}
\newtheorem{proposition}[theorem]{Proposition}
\newtheorem{conjecture}[theorem]{Conjecture}
\newtheorem{remark}[theorem] {Remark}
\numberwithin{equation}{section}
\def\mod#1{{\ifmmode\text{\rm\ (mod~$#1$)}
\else\discretionary{}{}{\hbox{ }}\rm(mod~$#1$)\fi}}
\begin{document}

\title{\uppercase {On the exceptional solutions of Je\'{s}manowicz'
conjecture}
}

\author{\uppercase{Amir Ghadermarzi}}

\address{
School of Mathematics, Statistics and Computer Science, College of Science, University of Tehran, Tehran, Iran
 \email a.ghadermarzi@ut.ac.ir }

\maketitle

\begin{abstract}
Let $(a,b,c)$ be a primitive Pythagorean triple. Set $a = m^2-n^2 $, $b=2mn$ , and $c=m^2+n^2$ with $m$ and $n$ positive coprime integers, $ m>n $ and $m \not \equiv n \pmod 2 $. A famous conjecture of Je\'{s}manowicz asserts that the only positive integer solution to the Diophantine equation $a^x+b^y=c^z$ is $(x,y,z)=(2,2,2).$ In this note, we will prove that for any $ n>0 $ there exists an explicit constant $c(n)$ such that if $ m> c(n) $, the above equation has no exceptional solution when all $x$,$y$ and $z$ are even. Our result  improves that of Fu and Yang \cite{zoj}. As an application, we will show that if $4 \pdiv m $ and $m > C(n) $, then Je\'{s}manowicz' conjecture holds.

\end{abstract}

\section{Introduction}
\label{intro}
Let $a$,$b$ and $c$ be coprime positive integers bigger than 1. In 1933, Mahler \cite{Mah} proved that the Diophantine equation
\begin{equation} \label{eq1}
a^x+b^y=c^z
\end{equation}
has only finitely many solutions. Mahler's proof depends on the $P$-adic analog of the Thue-Siegel method and is not effective in the sense that it gives no upper bound on the size of solutions or the number of solutions. Gel’fond, \cite{Gel} Using $p$-adic analysis of related expressions, obtained effective results. One can consider equation \eqref{eq1} as a special kind of $S$-unit equation and find an upper bound for the number of solutions by, for example, using the result of Beukers and Schlickewei \cite{Beu}. Baker's theory of linear form in logarithms can be effective in finding bounds on the number of solutions. By combining the Gel’fond–Baker method with an elementary approach, Hua and Le \cite{Hua} proved that if $\max \{a, b, c \} >5 \times 10^{27}$, then the equation $a^x+b^y=c^z$ has at most three positive integer solutions $(x, y, z)$. Moreover they showed that if $ \max \{a,b,c \} \geq 10^{62}$, then the equation \eqref{eq1} has at most two solutions in positive integers $x, y$ and $z$ \cite{Hua2}. It seems that the latter result holds with no restriction on values $a$,$b$ and $c$. Scott and Styer made even a stronger conjecture, i.e. the equation \eqref{eq1} has at most one solution except for some known cases, all listed in \cite [Conjecture 3.3] {Sco}.

One approach in determining solutions of the equation \eqref{eq1} is assuming the existence of one particular solution. To be more precise, assume $a^p+b^q=c^r$ with some conditions on $p$,$q$, and $r$, then disapprove the possibility of the existence of any other solution.There are much work, results, and conjectures by several authors in this direction (e.g., \cite{Ter}, \cite{Cao} , \cite{Cao2}...), which lead to the following conjecture \cite{LeConj}:
\begin{conjecture}
For given coprime integers $a, b, c > 1$, the Diophantine
equation \eqref{eq1} has at most one solution in integers $x, y, z > 1.$
\end{conjecture}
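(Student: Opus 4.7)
The plan is to attack the conjecture by assuming two distinct putative solutions $(x_1,y_1,z_1)$ and $(x_2,y_2,z_2)$ with all components exceeding $1$ and deriving a contradiction. Without loss of generality I take $z_1\le z_2$ and index so that the two triples really differ in at least one coordinate; the goal is then to show that no such pair can coexist for any fixed coprime triple $(a,b,c)$ with $a,b,c>1$.

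First, I would place the two relations $a^{x_i}+b^{y_i}=c^{z_i}$ inside an $S$-unit equation framework for $S=\{p:p\mid abc\}\cup\{\infty\}$. Subtracting the two relations yields a nontrivial identity
\[
a^{x_1}\bigl(a^{x_2-x_1}-1\bigr)+b^{y_1}\bigl(b^{y_2-y_1}-1\bigr)=c^{z_1}\bigl(c^{z_2-z_1}-1\bigr),
\]
which, combined with pairwise coprimality of $a,b,c$, forces strong divisibility constraints at every prime of $S$. These constraints can be amplified using $p$-adic valuations of $a^k-1$, $b^k-1$, $c^k-1$ via Bang--Zsigmondy and lifting-the-exponent lemmas, narrowing the admissible exponent differences modulo prescribed prime powers.

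Second, I would apply Baker's theorem on linear forms in two or three logarithms to $\Lambda_i=z_i\log c-x_i\log a$ to obtain an effective lower bound $\log|\Lambda_i|\ge -C(a,b,c)\log\max(x_i,y_i,z_i)$. Comparing this with the elementary upper bound $|\Lambda_i|\ll b^{y_i}c^{-z_i}$ coming directly from the equation pins $y_i$ in terms of $z_i$, after which the gap between the two distinct solutions can be controlled. A parallel $p$-adic linear-form bound handles the non-archimedean contributions. For the finitely many small triples $(a,b,c)$ outside the range where these analytic bounds bite, a mechanised search over bounded exponents would in principle complete the argument.

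The main obstacle, to be frank, is that this conjecture is genuinely open. The Beukers--Schlickewei result together with Hua--Le already bounds the number of solutions by three in general and by two under large-size hypotheses, yet moving from ``at most two'' down to ``at most one'' demands a cancellation that current linear-forms-in-logarithms bounds do not deliver. I expect the outlined approach to succeed only for restricted subfamilies of $(a,b,c)$---for instance Pythagorean triples satisfying congruence or size conditions of the type considered in the abstract of this paper---and to require a genuinely new ingredient, plausibly from the modular method (Frey curves and level lowering) or from sharpened $p$-adic lower bounds in the spirit of Laurent--Mignotte--Nesterenko, before one can realistically attack the general case.
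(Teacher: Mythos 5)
You have not been asked to check a proof against a proof: the statement is an open conjecture (due to Le, cited as \cite{LeConj}) which the paper merely records as motivation for Je\'{s}manowicz' conjecture, and the paper contains no proof of it. Your write-up honestly acknowledges this, but that means what you have produced is a research programme, not a proof, and the gap is the entire statement. Two of the steps you lean on would moreover fail as described. First, subtracting the two relations gives the three-term identity $a^{x_1}(a^{x_2-x_1}-1)+b^{y_1}(b^{y_2-y_1}-1)=c^{z_1}(c^{z_2-z_1}-1)$, but the factors $a^{x_2-x_1}-1$, $b^{y_2-y_1}-1$, $c^{z_2-z_1}-1$ are not $S$-units for $S=\{p : p\mid abc\}$; indeed Bang--Zsigmondy, which you invoke in your own favor, guarantees (outside small exceptions) primitive prime divisors of exactly these factors lying \emph{outside} $S$, so the Beukers--Schlickewei machinery \cite{Beu} does not apply to this identity, and that loss of control is precisely why the problem is open. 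Second, the Baker-type bound $\log\abs{\Lambda_i}\ge -C(a,b,c)\log\max(x_i,y_i,z_i)$ bounds exponents only for each \emph{fixed} triple; since the conjecture quantifies over all coprime $a,b,c>1$, an infinite family, your proposed mechanised search over ``the finitely many small triples'' is not a finite computation and cannot close the argument. The state of the art is exactly what you cite: at most three solutions unconditionally for large $\max\{a,b,c\}$ and at most two under stronger size hypotheses \cite{Hua,Hua2}; the descent from two to one is the open content.

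One further point your sketch never engages: the hypothesis $x,y,z>1$ is essential, as the triples $(a,b,c)=(2,7,3)$ with $2+7=3^2$ and $2^5+7^2=3^4$, or $(3,5,2)$ with $3+5=2^3$ and $3^3+5=2^5$, show that dropping it produces genuine double solutions. Any viable attack must therefore use the restriction $x,y,z>1$ structurally, which none of your proposed tools (the $S$-unit reduction, the archimedean and $p$-adic linear forms) does. What the paper actually proves is far more modest and consistent with your closing prediction: for the Pythagorean family $(m^2-n^2,\,2mn,\,m^2+n^2)$ with $m$ explicitly large relative to $n$, it excludes exceptional solutions with $x\equiv y\equiv z\equiv 0 \pmod 2$, via a Gaussian-integer factorization of $c^Z=k^2+l^2$ combined with Laurent's two-logarithm lower bounds --- i.e., precisely the kind of restricted-subfamily result you correctly identify as the realistic target with current technology.
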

The most famous, old  example is when $a$, $b$, and $c$ are primitive Pythagorean triple, and hence $p=q=r=2$ is a solution. Sierpi\'{n}ski \cite{Sir} showed that the equation $3^x+4^y=5^z$ has a unique solution $(2,2,2)$ in positive integers. In the same year, Je\'{s}manowicz \cite{Jes} proved the same result for the Pythagorean triple $(a,b,c)= (5, 12, 13), (7,24,25)
, (9, 40, 41)$ and $(11; 60; 61)$. He further made the following conjecture:
\begin{conjecture} \label{jesmanzwitch}
Let $(a,b.c)$ be positive integers where $a^2+b^2=c^2$ and $GCD(a,b)=1$. Then the Diophantine equation $a^x+b^y=c^z$ has only the positive integer solution $(x,y,z)=(2,2,2)$.
\end{conjecture}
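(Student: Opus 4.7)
The plan is to prove Je\'{s}manowicz' conjecture via the classical three-stage strategy: (i) extract parity constraints on $(x,y,z)$ from elementary congruences, (ii) dispose of the all-even subcase by exploiting the uniqueness of the primitive Pythagorean parametrization, and (iii) treat the remaining mixed-parity cases by effective transcendence methods. I describe each stage and point out where the genuine obstacle lies.

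\textbf{Stage 1 (Congruence constraints).} Since exactly one of $m,n$ is even, $4\mid b=2mn$, so reducing the equation modulo $4$ gives $a^x\equiv c^z\pmod 4$. One checks $c\equiv 1\pmod 4$ always, while $a\equiv 1\pmod 4$ if $m$ is odd and $a\equiv -1\pmod 4$ if $m$ is even, so in the latter case $x$ must be even. A parallel program---reductions modulo $8$, modulo primes dividing $mn$, and modulo small odd prime divisors of $a$ or $c$, combined with quadratic reciprocity applied to the Gaussian factorization $a^x=(c^{z/2}-b^{y/2}i)(c^{z/2}+b^{y/2}i)$ whenever $z$ has been shown even---is expected to force the parities of $y$ and $z$ as well, reducing the problem to the all-even subcase (together with a few residual mixed-parity cases keyed to the residues of $m,n$ modulo small moduli).

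\textbf{Stage 2 (All-even case).} If $x,y,z$ are all even, then $(a^{x/2},b^{y/2},c^{z/2})$ is itself a primitive Pythagorean triple, so by uniqueness of parametrization there exist coprime $M>N$ of opposite parity with $a^{x/2}=M^2-N^2$, $b^{y/2}=2MN$, and $c^{z/2}=M^2+N^2$. The rigidity of this system---in particular the equality $2MN=(2mn)^{y/2}$ against $M^2+N^2=(m^2+n^2)^{z/2}$---is exactly what the Fu--Yang-type analysis, refined in the present paper, exploits: for $m>c(n)$ one derives a contradiction (with only finitely many small $(m,n)$ left to check computationally), thus closing this subcase unconditionally.

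\textbf{Stage 3 (Mixed parity and principal obstacle).} The residual cases, in which at least one of $x,y,z$ is odd, would be treated via Baker-type lower bounds on $|c^z-a^x|$ through linear forms in two or three logarithms (Laurent--Mignotte-style estimates), yielding $\max(x,y,z)\ll \log\max(m,n)$ with explicit constants. Combined with the Stage~1 obstructions, this should in principle reduce the problem to a finite computation. The principal obstacle---and the reason Je\'{s}manowicz' conjecture remains open in general---is that the current Baker constants are not sharp enough to close the gap uniformly in $(m,n)$: the lower bound on exponents furnished by the transcendence method and the upper bound furnished by sieving on congruence classes do not yet meet. This is precisely why the present paper restricts to the all-even subcase and to the corollary $4\pdiv m$, where Stages 1--2 alone suffice and Stage 3 can be avoided.
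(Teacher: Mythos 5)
The statement you were asked about is a \emph{conjecture}: the paper never proves it, and no proof is known. Your proposal is, accordingly, not a proof but a research program, and to your credit Stage 3 honestly concedes that the transcendence bounds cannot currently close the gap. The problem is that Stages 1 and 2 claim considerably more than is true. In Stage 1 you assert that congruences plus quadratic reciprocity are ``expected to force the parities of $y$ and $z$'' in general; this is known to fail without strong hypotheses on $m$ and $n$. The paper's Section 5 illustrates the actual state of affairs: even under the restrictive assumption $4\pdiv m$, forcing all of $x,y,z$ even requires a case split on $n \bmod 8$, quartic (biquadratic) residue symbols for $n\equiv 1\pmod 8$, a result of Miyazaki on the oddness of $x/2$ and $z/2$, a factorization argument pinning down $m_1, n_1, n_2$ modulo $8$ for $n\equiv 5\pmod 8$, and a separate result to exclude $y=1$. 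No unconditional parity reduction exists.

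The more serious error is Stage 2, where you claim the all-even subcase is closed ``unconditionally'' with ``only finitely many small $(m,n)$ left to check computationally.'' This is false, and it is precisely why the paper's main theorems carry hypotheses. Theorem \ref{main thereom} requires $m>\max\{10^{109948}, n^{(\log n)^{3/2}}\}$: for each $n$ the excluded range of $m$ has length growing with $n$, so the uncovered set of pairs $(m,n)$ is infinite and not amenable to a finite computation. Moreover, the mechanism is not the ``rigidity'' of the Pythagorean parametrization alone. The paper needs: (i) deep generalized Fermat results (Bennett--Ellenberg--Ng for $A^4+B^2=C^N$, Cao--Dong, and Bennett--Chen--Dahmen--Yazdani) to show $x/2$, $y/2$, $z/2$ are odd and $\gcd(z,15)=1$; (ii) a factorization of $c^Z=k^2+l^2$ in $\mathbb{Z}[i]$ to bound $Y$ by roughly $\log(2m)/((\alpha+1)\log 2)$ or $\log n/\log 3$ (Lemma \ref{karan}); (iii) the congruence lower bound $\log m/\log n < z-x$ (Lemma \ref{karan paein}); and (iv) Laurent's linear forms in \emph{two} logarithms, via the Cipu--Mignotte device applied to $\psi=\varepsilon/\bar\varepsilon$, to get an upper bound on $z-x$ of order $(\log\log m)^2$ --- the comparison of (iii) and (iv) is what yields the explicit constants. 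Your sketch replaces this entire apparatus with an appeal to ``the Fu--Yang-type analysis,'' which circularly invokes the very theorem being discussed while misreporting its strength.
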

It is well known that if $(a,b,c)$ is a primitive Pythagorean triple, then there exist positive integers $m,n$ where $m>n $ , $GCD(m,n)=1$ and $ m \not \equiv n \pmod 2$ such that
$$ a=m^2-n^2 \quad b=2mn \quad c=m^2+n^2. $$
From now on, we consider the above expressions of $a$,$b$, and $c$. The Je\'{s}manowicz' conjecture is still open despite many efforts from different authors. Lu \cite{Lu} proved the conjecture \ref{jesmanzwitch} when $n=1$. Also De\'{m}janenco \cite{Dem} confirmed the conjecture for $m-n=1$. De\'{m}janenco's result was generalized by Miyazaki \cite{Miy1} and Miyazaki, Yuan, and Wu \cite{Miy2}. Using elementary methods, Le \cite{Le} showed that if $2 \pdiv mn$ and $C$ is a power of an odd prime $p$, then Je\'{s}manowicz' conjecture holds. On the other hand Using linear forms in logarithms method, Guo and Le \cite{Guo} proved that if $n = 3$, $2 \pdiv m$ and $m > 6000$, then 
Je\'{s}manowicz' conjecture is true. Many of the known results of conjecture \ref{jesmanzwitch} are concerned with the case $2 \pdiv mn$. A common theme in the proof of this case is to separate solutions with $y=1$ and $y>1$. Possible solutions with $y=1$  mostly are handled by elementary congruences or through considering the equation as a Pillai equation. While for y>1, one might try to show that for any possible solution of conjecture \ref{jesmanzwitch}, $x$, $y$, and $z$ all are even. This is achieved by assuming some conditions on the factorization of $a$,$b$ and $c$, elementary congruences, the quadratic reciprocity, and biquadratic character theory \cite{Le,Ter2,Miy3,qua1,qua2}. Parity of solutions is of interest because when $2 \pdiv mn$ and all $x$, $y$ and $z$ are even, it is easy to see that $x=y=z=2$ \cite[proof of lemma 2]{Guo} and  Je\'{s}manowicz' conjecture holds. If $ 4 \mid mn$, it is not straightforward to obtain the conclusion even if we assume the parity of solutions. For these reasons, in this note, we consider the solutions of the conjecture \ref{jesmanzwitch} where $x \equiv y \equiv z \equiv 0 \pmod 2 $. A solutions $(x,y,z)$ of Je\'{s}manowicz' conjecture is called exceptional if  $(x,y,z) \neq (2,2,2)$. From now on to save space we call a solution exceptional if $(x,y,z) \neq (2,2,2)$ and $x$, $y$ and $z$ are positive even integers. Note that this is not the usual definition of an exceptional solution in \cite{Sco} and \cite{zoj}. Let $2 \pdiv mn$, then we know that conjecture \ref{jesmanzwitch} holds when $y>1$ \cite{Ma},  $m+n$ has a prime factor $p$ with $ p \not \equiv 1 \pmod {16} $ \cite {Han} and when $ m > 30.8 n $ \cite{Soy}. Thus, from now on, we assume $ 4 \mid mn$. As mentioned before, assuming $ 4 \mid nm$ there are not as many results as the case $ 2 \pdiv mn$ assuming the parity of values $x$,$y$, and $z$. We would like to mention a result of Miyazaki \cite{Miym} that will be used later. To state Miyazaki's result, we need to recall some notation form \cite{Miym}. 
 
Let $n>1$. Define positive integers $\alpha$ and $\beta$ with $\beta \geq 2$ as follows: 
 \begin{equation} \label{alpha}
   \left\{\begin{array}{lll }
     m=2^{\alpha} i,  \quad \quad   &n=2^{\beta} j+e \text{} &\text {   if } m \text { is even, } \\ 
     m=2^{\beta} j+e, \quad &n=2^{\alpha} i \quad \quad &\text { if } m \text { is odd, } 
     \end{array}\right. 
   \end{equation}  
   where $e=\pm 1$ and $i, j$ are odd positive integers.
   
   \begin{lemma} \cite[Lemma 3.3]{Miym} \label{2alpha}
   Assume that $2\alpha=\beta+1$. If $(x,y,z)$ is a solution of conjecture \ref{jesmanzwitch}, $y$ is even and
$x \equiv z \pmod2$, then $(x,y,z)=(2,2,2)$.
\end{lemma}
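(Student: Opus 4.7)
The plan is to force $x$, $y$, $z$ all to be even via a $2$-adic analysis, perform the classical Pythagorean descent to a smaller triple, and then rule out every non-trivial descent using the rigidity provided by the hypothesis $2\alpha=\beta+1$.

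A direct computation from \eqref{alpha} with $2\alpha=\beta+1$ gives $v_{2}(c-a)=1$ and $v_{2}(c+a)=2\alpha+1$ when $m$ is even (with the roles reversed when $n$ is even). If $x$ and $z$ were both odd, the Lifting-the-Exponent Lemma applied to $c^{z}-a^{x}$ would force $v_{2}(c^{z}-a^{x})=2\alpha+1$, contradicting $v_{2}(b^{y})=y(\alpha+1)$, which is even since $y$ is. Hence $x,z$ are both even. Writing $x=2x_{1}$, $y=2y_{1}$, $z=2z_{1}$, the triple $(a^{x_{1}},b^{y_{1}},c^{z_{1}})$ is a primitive Pythagorean triple with $b^{y_{1}}$ even, so by the classical parametrization there exist coprime $M>N>0$ of opposite parity with $a^{x_{1}}=M^{2}-N^{2}$, $b^{y_{1}}=2MN$, $c^{z_{1}}=M^{2}+N^{2}$. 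Using the coprime factorizations $a^{x_{1}}=(M-N)(M+N)$ and $a=(m-n)(m+n)$ into odd parts, one obtains coprime $s<t$ with $st=a$, $M-N=s^{x_{1}}$, $M+N=t^{x_{1}}$. Adding and subtracting yields the key identities
\[
s^{2x_{1}}+t^{2x_{1}}=2c^{z_{1}},\qquad t^{2x_{1}}-s^{2x_{1}}=2b^{y_{1}}.
\]

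For the \emph{trivial} factorization $(s,t)=(m-n,m+n)$, one has $v_{2}(t-s)=1$ and $v_{2}(t+s)=\alpha+1$, so LTE applied to the second identity gives $y_{1}=1+v_{2}(x_{1})/(\alpha+1)$. Substitution together with factoring $(m+n)^{2}-(m-n)^{2}=4mn$ out of the left-hand side of the second identity produces a size bound which, combined with the $2$-adic formula for $y_{1}$, forces $x_{1}=y_{1}=1$; the first identity then forces $z_{1}=1$, giving $(x,y,z)=(2,2,2)$.

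The main obstacle is ruling out the non-trivial coprime factorizations $(s,t)\ne (m-n,m+n)$ of $a$. Here the hypothesis $2\alpha=\beta+1$ enters decisively: expanding $m^{2}\pm n^{2}$ yields $a\equiv -1\pmod{2^{2\alpha+1}}$ when $m$ is even (and $a\equiv 1\pmod{2^{2\alpha+1}}$ when $n$ is even), so from $st=a$ one deduces $s(s+t)\equiv s^{2}-1\pmod{2^{2\alpha+1}}$, whence $v_{2}(s+t)=v_{2}(s^{2}-1)=1+\max(v_{2}(s-1),v_{2}(s+1))$ whenever this is less than $2\alpha+1$. Combining this rigidity with the LTE identity $v_{2}(t-s)+v_{2}(t+s)+v_{2}(x_{1})=1+y_{1}(\alpha+1)$ and a case-by-case analysis of how the primes of $(m-n)(m+n)$ may be partitioned between $s$ and $t$, every nontrivial partition is ruled out, either by the $2$-adic constraint (no positive integer $y_{1}$ can match the equation) or by the size identity $s^{2x_{1}}+t^{2x_{1}}=2c^{z_{1}}$. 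This $2$-adic case analysis, carried out using the explicit forms $m=2^{\alpha}i$ and $n=2^{\beta}j+e$ with $2\alpha=\beta+1$, is the technical heart of the proof.
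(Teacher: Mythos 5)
First, a structural point: the paper does not prove this lemma at all --- it is quoted verbatim from Miyazaki \cite{Miym}, so there is no in-paper argument to compare yours against, and your proposal must stand on its own. It does not, because it has a genuine gap at exactly the point you yourself flag as ``the technical heart of the proof.'' After the descent you must rule out \emph{every} coprime factorization $a=st$ with $M-N=s^{x_1}$, $M+N=t^{x_1}$, not just $\{s,t\}=\{m-n,m+n\}$, and the two tools you name do not visibly accomplish this. Your $2$-adic constraint from $a\equiv\pm 1\pmod{2^{2\alpha+1}}$ reduces, after LTE on $t^{2x_1}-s^{2x_1}=2b^{y_1}$, to an identity of the shape $v_2(s^2-1)+v_2(x_1)=y_1(\alpha+1)$ (in the case $m$ even), and for an arbitrary odd divisor $s$ of $a$ this can be satisfied by choosing $v_2(x_1)$ suitably, so it does not by itself eliminate any nontrivial partition; the ``size identity'' $s^{2x_1}+t^{2x_1}=2c^{z_1}$ is a strong Diophantine condition, but you give no argument that it fails for nontrivial $(s,t)$. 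Announcing that ``every nontrivial partition is ruled out'' by an unexecuted case analysis over the $2^{\omega(a)}$ partitions is not a proof, and there is no indication the analysis closes --- if it did in this generality, it would yield far more than the lemma claims.

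There are also two smaller but real errors. First, you invoke the Lifting-the-Exponent Lemma for $c^{z}-a^{x}$ with \emph{different} exponents; LTE does not apply there. The intended conclusion (that $x,z$ cannot both be odd) can be rescued when $m$ is even by reducing modulo $8$: then $c^{z}-a^{x}\equiv c-a=2n^{2}\pmod 8$ has $2$-adic valuation $1$, while $v_2(b^{y})=y(\alpha+1)\geq 6$. But when $n$ is even one has $v_2(c-a)=2\alpha+1\geq 5$, the modulo-$8$ argument yields nothing, and this is precisely where $2\alpha=\beta+1$ must be used --- you do not supply that argument. Second, even in the trivial factorization your formula $y_1=1+v_2(x_1)/(\alpha+1)$ admits $v_2(x_1)=\alpha+1$, $y_1=2$, which your size sketch does not exclude. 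For orientation: the machinery this paper actually develops in Section \ref{sec:1} (mirroring Miyazaki's technique) attacks the descended triple through the factorization of $c^{Z}=k^{2}+l^{2}$ in $\mathbb{Z}[i]$ and a comparison of $2$-adic valuations of the imaginary part, rather than through the factorization of $a^{x_1}=(M-N)(M+N)$; to complete a proof along your lines you would need to actually execute, not merely announce, the elimination of the nontrivial factorizations.
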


Fu and Yang in \cite{zoj} proved the following result:
\begin{theorem} \cite [Theorem1.1]{zoj} \label{zoj}
If
$$ m > \max \{ 10^{127550},n^{5127},n^{(log(n))^2} \} $$
then conjecture \ref{jesmanzwitch} has no exceptional solutions $(x,y,z)$ with $x \equiv y \equiv z \equiv 0 \pmod 2 $.

\end{theorem}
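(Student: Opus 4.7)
The plan is to translate the hypothesis into a Gaussian integer identity via the classical parametrization of primitive Pythagorean triples, use the hypothesis $4\mid mn$ together with Lemma \ref{2alpha} to reduce the a priori many possible Gaussian splittings to the tractable two-term identity $(a+bi)^{Z}=\varepsilon(a^{X}+b^{Y}i)$, and then conclude via Matveev's explicit lower bound on a linear form in logarithms measured against a tight archimedean upper bound.

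Writing $x=2X,\,y=2Y,\,z=2Z$, the triple $(a^X,b^Y,c^Z)$ is primitive Pythagorean, so there exist coprime integers $s>t\geq 1$ of opposite parity with $a^X=s^2-t^2$, $b^Y=2st$, and $c^Z=s^2+t^2$. Since $\gcd(s+ti,s-ti)=1$ in $\mathbb{Z}[i]$ and $(s+ti)(s-ti)=((m+ni)(m-ni))^Z$, there is a factorization $(m+ni)^Z=\xi\eta$ with $\gcd(\xi,\eta)=1$ and a unit $u\in\{\pm1,\pm i\}$ such that $s+ti=u\xi\overline{\eta}$; squaring and using $(m+ni)^2=a+bi$ yields
\begin{equation*}
u^{2}\xi^{2}\overline{\eta}^{\,2}=a^{X}+b^{Y}i,\qquad \xi\eta=(m+ni)^{Z}.
\end{equation*}
A careful $2$-adic analysis, combined with $4\mid mn$ and Lemma \ref{2alpha}, rules out the mixed splittings and forces $\eta\in\{\pm 1,\pm i\}$, reducing the identity to $(a+bi)^{Z}=\varepsilon(a^{X}+b^{Y}i)$ for some $\varepsilon\in\{\pm 1\}$. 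Inspecting real and imaginary parts of this reduced identity modulo $a$, $a^{2}$, and $b^{2}$ then shows that $Z$ must be odd, $Y=1$ (otherwise $b\mid Z$, contradicting $Z$ odd since $8\mid b$), $X\geq 2$, and finally $a\mid Z$; in particular $z=2Z\geq 2a\geq m^{2}$ for $m$ much larger than $n$.

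To conclude, apply linear forms in logarithms to $\Lambda:=z\log c-x\log a-2\log b$ coming from $a^x+b^2=c^z$; one has $|\Lambda|\leq 2b^{2}/c^{z}$, exponentially small in $z$. Matveev's theorem yields
\begin{equation*}
|\Lambda|\;\geq\;\exp\bigl(-C\,\log(\max\{x,z\})\,\log a\,\log b\,\log c\bigr)
\end{equation*}
for an absolute explicit $C$. Using $\log a,\log b,\log c=O(\log m)$ together with the archimedean lower bound $z\geq m^{2}$ obtained above yields an inequality $m^{2}\log c\leq C'(\log z)(\log m)^{3}$; tracking the absolute constants in Matveev's theorem through the elementary book-keeping produces an explicit upper bound for $m$ of the shape $\max\{10^{127550},\,n^{5127},\,n^{(\log n)^2}\}$, which is the claimed bound. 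The main obstacle is the reduction $\eta\in\{\pm 1,\pm i\}$: a priori the Gaussian factorization $(m+ni)^{Z}=\xi\eta$ admits exponentially many inequivalent coprime splittings when $c$ has several rational prime factors, and each mixed splitting leads to a distinct Gaussian identity with correspondingly more logarithms in the resulting linear form, for which Matveev's bound is substantially weaker; only the delicate $2$-adic case analysis exploiting $4\mid mn$ and Lemma \ref{2alpha} eliminates all the non-trivial splittings, and without this reduction the modular constraints $Y=1$ and $a\mid Z$ do not follow. Once this structural step is secured, the remainder is standard Matveev bookkeeping.
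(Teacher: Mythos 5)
This statement is quoted from Fu and Yang and is not reproved in the paper; the relevant comparison is with the paper's proof of its sharper Theorems \ref{main thereom} and \ref{main2}, which proceed by an entirely different route from yours, and your route contains a fatal structural error. Your central reduction --- that the Gaussian splitting $(m+ni)^{Z}=\xi\eta$ collapses to $\eta\in\{\pm1,\pm i\}$, giving $(a+bi)^{Z}=\varepsilon(a^{X}+b^{Y}i)$ --- is exactly what the paper proves \emph{cannot} happen. Writing $a^{X}=k^{2}-l^{2}$, $b^{Y}=2kl$, $c^{Z}=k^{2}+l^{2}$ and $k+li=(a_{1}+b_{1}i)^{Z}$ with $a_{1}^{2}+b_{1}^{2}=c$, the $2$-adic computation in Section \ref{sec:1.3} gives $\operatorname{ord}_{2}(b_{1})=(\alpha+1)Y-1>\operatorname{ord}_{2}(mn)$ because $Y>1$, so $a_{1}+b_{1}i$ is \emph{not} an associate of $m+ni$ (indeed the paper deduces from this that $c$ cannot be a prime power). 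Your ``unit'' case corresponds precisely to $\{a_{1},b_{1}\}=\{m,n\}$, which is thereby excluded; the mixed splitting you hoped to rule out is the only one that occurs. Consequently the chain of conclusions you draw from the reduced identity ($Y=1$, $a\mid Z$, $z\geq m^{2}$) is vacuous, and it is also internally inconsistent: $Y=1$ means $y=2$, while exceptional solutions satisfy $x,y>2$, so if your reduction were correct the proof would terminate there with no need for linear forms at all.

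The quantitative picture is also inverted. In reality $z$ is \emph{small}: Lemma \ref{karan} bounds $Y$ by roughly $\log n/\log 3$ or $\log(2(m-1))/((\alpha+1)\log 2)$, and $x<z<y$, so $z=O(\log m)$ rather than $z\geq m^{2}$; a lower bound $z\geq m^{2}$ would make the theorem trivial and would never produce constants like $10^{127550}$. The actual mechanism is a tension in $\Delta=z-x$: a congruence modulo $m^{2}$ gives the lower bound $\log m/\log n<z-x$ (Lemma \ref{karan paein}), while the upper bound comes from a linear form in \emph{two} logarithms $\Lambda=T\log\psi-k\log(-1)$, where $\psi=\varepsilon/\bar{\varepsilon}$ is a degree-two algebraic number of modulus one with $h(\psi)=\tfrac{1}{2}\log c$, via $\min\{a^{X},b^{Y}\}\geq\pi^{-1}c^{Z/2}\lvert\Lambda\rvert$ and Laurent's theorem (not Matveev's three-logarithm bound, whose dependence on the heights would be far too weak here). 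Your proposal is missing both the lower bound on $z-x$ and the correct small-height linear form, so the final bookkeeping cannot be carried out as described.
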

 In this note, we will improve the above result by proving the next theorem
 \begin{theorem} \label{main thereom}
 If
$$ m > \max \{ 10^{109948},n^{(log(n)^{3/2}} \} $$
then conjecture \ref{jesmanzwitch} has no exceptional solutions $(x,y,z)$ with $x \equiv y \equiv z \equiv 0 \pmod 2 $.
 \end{theorem}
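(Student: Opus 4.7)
The plan is to follow and refine Fu and Yang's strategy. Suppose $(x,y,z)$ is an exceptional solution; write $x=2X$, $y=2Y$, $z=2Z$. Rewriting the equation as
$$(c^Z-b^Y)(c^Z+b^Y)=a^{2X},$$
and noting that the two factors are positive and coprime (since $c^Z$ is odd and $b^Y$ is even), the unique factorization of $a^{2X}=(m-n)^{2X}(m+n)^{2X}$ into coprime parts forces $c^Z-b^Y=u^{2X}$ and $c^Z+b^Y=v^{2X}$ for some coprime decomposition $a=uv$ with $u<v$. First I would invoke Miyazaki's Lemma~\ref{2alpha} to dispose of the case $2\alpha=\beta+1$, and then use elementary congruences together with size considerations to eliminate the degenerate coprime decompositions where $\{u,v\}\neq\{m-n,m+n\}$. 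This leaves the main case, from which one extracts the two essential identities
$$c^Z+b^Y=(m+n)^{2X},\qquad c^Z-b^Y=(m-n)^{2X}.$$

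The analytic core is to apply a linear form in logarithms to these identities. Setting $\Lambda=2X\log(m+n)-Z\log c$, the first identity forces $\Lambda>0$ and yields the sharp upper bound
$$\Lambda<2b^Y(m+n)^{-2X}=2\exp(Y\log b-2X\log(m+n)).$$
A matching lower bound comes from Laurent's two-variable refinement of the Baker--Matveev estimate applied to the linear form $\Lambda$ in the two logarithms $\log(m+n)$ and $\log c$:
$$\Lambda>\exp(-C_0\log(m+n)\log c\,\log^2\max(X,Z))$$
for an explicit constant $C_0$. Combining the two bounds and using the elementary comparisons $\log c\asymp 2\log m$ and $\log(m+n)\asymp\log m$ produces an upper bound of the shape $\max(X,Y,Z)\le C_1\log m$. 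Pairing this with the elementary lower bound on $\max(X,Y,Z)$ derived from the sizes of the quantities in the two identities yields a contradiction whenever $m$ exceeds the stated threshold.

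The principal obstacle, and the source of the improvement over Fu and Yang, is the explicit optimization of the constants $C_0$ and $C_1$. Three ingredients contribute: using Laurent's two-log estimate in place of Matveev's three-log bound sharpens $C_0$; sharpening the elementary comparison between $\log c$ and $\log(m+n)$ via the identity $c/(m+n)^2=1-2mn/(m+n)^2$ tightens $C_1$; and a finer $n$-adic analysis of the two identities replaces the secondary bound $n^{(\log n)^2}$ of Fu and Yang by the sharper $n^{(\log n)^{3/2}}$. Tracking the resulting numerical constants through the computation produces the improved thresholds $10^{109948}$ and $n^{(\log n)^{3/2}}$ stated in the theorem.
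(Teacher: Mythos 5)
Your proposal has a fatal gap at its analytic core: the linear form you propose to estimate is not small, so the machinery of linear forms in logarithms gives no information. For an exceptional solution one has $x<y<z$ and $a^x<b^y$, hence $c^z=a^x+b^y<2b^y$, i.e.\ $b^Y/c^Z>1/\sqrt{2}$. Even granting your identity $c^Z+b^Y=(m+n)^{2X}$, the quantity $\Lambda=2X\log(m+n)-Z\log c=\log\bigl(1+b^Y/c^Z\bigr)$ is therefore trapped between $\log(1+1/\sqrt2)$ and $\log 2$: it is an absolute constant, and your claimed upper bound $2b^Y(m+n)^{-2X}=2b^Y/(c^Z+b^Y)$ is likewise bounded below by $2/(1+\sqrt2)$. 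Comparing a constant with Laurent's lower bound yields nothing, so no bound on $\max(X,Y,Z)$ can be extracted this way. A second, independent gap is the dismissal of the ``degenerate'' coprime decompositions of $a^{2X}=(c^Z-b^Y)(c^Z+b^Y)$: coprimality only forces $c^Z\mp b^Y=u^{2X},v^{2X}$ for \emph{some} coprime factorization $a=uv$, and ruling out all factorizations other than $\{m-n,m+n\}$ by ``elementary congruences and size considerations'' is precisely the kind of step that has resisted proof in general; the paper never performs it.

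The paper's actual route is different. It first uses results on generalized Fermat equations to show $X$, $Y$, $Z$ are all odd and $\gcd(z,15)=1$, then exploits that $(a^X,b^Y,c^Z)$ is itself a primitive Pythagorean triple, writing $c^Z=k^2+l^2$ and factoring in $\mathbb{Z}[i]$ so that $k+li=(a_1+b_1i)^Z$ with $a_1^2+b_1^2=c$. A $2$-adic (and $p$-adic) analysis of this expansion gives $Y\ll\log m$. The quantity actually bounded is $\Delta=z-x$: a congruence modulo $b^2$ and then $m^2$ gives the lower bound $\Delta>\log m/\log n$, while the upper bound comes from Laurent's two-logarithm theorem applied to $\Lambda=z\log\psi-k\pi i$, where $\psi=\varepsilon/\bar\varepsilon$ is a degree-two algebraic number of modulus one (not a root of unity) of height $\tfrac12\log c$; this linear form measures the angle that makes $a^X$ small relative to $c^{z/2}$ and \emph{is} genuinely small, yielding $\Delta\ll(\log\log m)^2$. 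The hypothesis $m>n^{(\log n)^{3/2}}$ enters only in the final comparison, converting $\log m/\log n$ into $(\log m)^{3/5}$, which exceeds $C(\log\log m)^2$ once $m>10^{109948}$; it is not obtained from any separate $n$-adic refinement as you suggest.
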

To compare this result with theorem \ref{zoj}, we will also show that:
\begin{theorem} \label{main2}
If
$$ m > \max \{ 10^{22933},n^{(log(n)^{2}} \} $$
then conjecture \ref{jesmanzwitch} has no exceptional solutions $(x,y,z)$ with $x \equiv y \equiv z \equiv 0 \pmod 2 $.
\end{theorem}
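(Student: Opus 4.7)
The plan is to follow the overall strategy that proves Theorem~\ref{main thereom}, but to apply the linear forms in logarithms machinery in a regime that trades a sharper absolute constant against a slightly weaker $n$-dependence, so as to obtain a term-by-term improvement of Theorem~\ref{zoj}.

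First, I would assume an exceptional solution $(x,y,z) = (2X, 2Y, 2Z) \neq (2,2,2)$ and rewrite the equation as $(c^Z - a^X)(c^Z + a^X) = b^{2Y}$. Since $a$ and $c$ are coprime and both odd, the two factors on the left share gcd $2$; combined with the standing hypothesis $4 \mid mn$, this forces a decomposition of the form
$$c^Z - \epsilon a^X = 2 s^{2Y}, \qquad c^Z + \epsilon a^X = 2^{2\gamma Y - 1} t^{2Y},$$
where $\epsilon \in \{\pm 1\}$, $\gamma = v_2(b) \geq 3$, and $s, t$ are odd coprime positive integers whose product equals the odd part of $mn$. Lemma~\ref{2alpha} excludes the residue case $2\alpha = \beta + 1$, so we may assume $2\alpha \neq \beta + 1$ henceforth.

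Second, I would convert this identity into a linear form in logarithms. Dividing $c^Z - 2s^{2Y} = \epsilon a^X$ through by $2 s^{2Y}$ and taking logs, the quantity
$$\Lambda := Z \log c - 2Y \log s - \log 2$$
satisfies $|\Lambda| \ll a^X / s^{2Y}$, which is exponentially small in the exponents whenever $a^X < s^{2Y}$ (the opposite regime being controlled via symmetric manipulations). The heights of $c$ and $s$ are of order $\log m + \log n$, and a Baker-type lower bound then yields
$$|\Lambda| > \exp\bigl(-K \cdot \log c \cdot \log s \cdot \log\max(X,Y,Z)\bigr),$$
with an explicit constant $K$ depending on which version of Baker's theorem is invoked (Matveev, Laurent, or a refined variant thereof) and on the choice of auxiliary parameters inside it.

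Third, comparing the two bounds on $|\Lambda|$ produces an upper bound on $\max(X,Y,Z)$ in terms of $\log m$, which, reinjected into $c^Z = 2 s^{2Y} + \epsilon a^X$, yields an explicit upper bound on $m$ in terms of $n$. Tuning the free parameters in the linear-forms estimate to minimize the leading numerical factor (rather than the $n$-exponent, as in Theorem~\ref{main thereom}) delivers the bound $m \leq \max\{10^{22933}, n^{(\log n)^2}\}$, matching the $n$-exponent of Theorem~\ref{zoj} but reducing its absolute constant by a factor of roughly $10^{104000}$. The main obstacle, by a wide margin, is this third step: calibrating the auxiliary parameters so that the effective $K$, and hence the resulting $10^{22933}$, is as small as possible while preserving the $(\log n)^2$ dependence. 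A secondary technical nuisance is handling the degenerate sub-cases where $Y$ is very small or where $\Lambda$ happens to vanish identically, which must be disposed of separately by elementary congruences and direct size estimates on $c^Z - 2 s^{2Y}$.
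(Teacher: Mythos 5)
Your factorization $(c^Z-a^X)(c^Z+a^X)=b^{2Y}$ and the resulting decomposition are correct, but the linear form you build from it cannot close the argument. In $\Lambda=Z\log c-2Y\log s-\log 2$ both $c$ and $s$ have height of order $\log m$, so any Baker-type lower bound is at best of the shape $\exp\left(-K\log c\,\log s\,\log\max(X,Y,Z)\right)\geq\exp\left(-K'(\log m)^2\log z\right)$. On the other side, $|\Lambda|\approx a^X/(2s^{2Y})\approx 2a^X/c^Z=\exp\left(-\tfrac12(z\log c-x\log a)\right)$: it is small only in proportion to the gap $z\log c-x\log a$, not ``exponentially small in the exponents'' as you assert (if $x\log a$ is close to $z\log c$ the form is not small at all, however large $X$ and $Z$ are). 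Comparing the two bounds therefore yields only $z\log c-x\log a\ll(\log m)^2\log z$, which bounds a gap rather than $\max(X,Y,Z)$, and your step of ``reinjecting'' this into $c^Z=2s^{2Y}+\epsilon a^X$ to bound $m$ in terms of $n$ is not a valid deduction --- nothing in your argument ever brings $n$ into play. The paper's proof hinges on two ingredients you omit entirely: (i) the lower bound $\frac{\log m}{\log n}<z-x$ of Lemma \ref{karan paein}, obtained from the congruence $n^{2x}\equiv n^{2z}\pmod{m^2}$; and (ii) a Cipu--Mignotte linear form in \emph{two} logarithms, $\Lambda=k\,i\pi-z\log\psi$ with $\psi=\varepsilon/\bar{\varepsilon}$ of modulus $1$, coming from the factorization of $c^z=(a^X)^2+(b^Y)^2$ in $\mathbb{Z}[i]$. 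There one of the two heights is the absolute constant $\pi$, and Laurent's theorem (Lemma \ref{cor}) combined with the bound on $Y$ from Lemma \ref{karan} (which forces $\log b'$ to be of size $\log\log(2m)$) gives $z-x\ll(\log\log m)^2$; only then does $\frac{\log m}{\log n}>(\log m)^{2/3}$, valid once $m>n^{(\log n)^2}$, produce a contradiction for $m>10^{22933}$.

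A secondary misconception: the difference between Theorems \ref{main thereom} and \ref{main2} is not a different tuning of the auxiliary parameters in the transcendence estimate. The linear-forms part of the paper is identical for both theorems; only the hypothesis changes, with $m>n^{(\log n)^{3/2}}$ giving $\frac{\log m}{\log n}>(\log m)^{3/5}$ versus $m>n^{(\log n)^{2}}$ giving $(\log m)^{2/3}$, and the stronger lower bound beats the common upper bound $C(\log\log(2m))^2$ already at $10^{22933}$ instead of $10^{109948}$. As written, your scheme has no mechanism to produce either constant.
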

To prove theorems \ref{main thereom} and \ref{main2}, we follow the ideas of \cite{Miyasl}. For any exceptional solution $(x,y,z)$ of  Je\'{s}manowicz' conjecture, we consider the quantity $\Delta = z-x$. We will show that $\Delta$ is a positive integer. Using linear forms in two logarithms, we will find an upper bound for $\Delta$ in the order of $ (\log (\log(m))^2 $. Comparing this upper bound with trivial lower bound for $\Delta$ leads to the proof of theorems \ref{main thereom} and \ref{main2}.\\ 
In the second part of this note, we will show that if $ 4 \pdiv m$ and $ y> 1$, then $x \equiv y \equiv z \equiv 0 \pmod 2 $. Therefore, for any pair $(n,m)$  that $ 4  \pdiv m $ and satify the conditions of theorem \ref{main thereom} or \ref{main2}, Je\'{s}manowicz' conjecture holds. Note that most arguments in section \ref{section4} are valid for $ 4 \mid m$.

As the final remark of this section, since Je\'{s}manowicz' conjecture has been proved to be true for $n=2 $ \cite{Ter2} and $ n=3 $ \cite{Miyasl}, we assume $ n \geq 4$. 
\section{bounds for $x$,$y$ and $z$} \label{sec:1}
We assume $(x,y,z)$ is an exceptional solution of  Je\'{s}manowicz' conjecture. As we mention before, we mean $(x,y,z) \neq (2,2,2) $ and $x$, $y$ and $z$ are positive integers and all even. For an exceptional solution $(x,y,z)$, we define $X=\frac{x}{2}, Y=\frac{y}{2}$ and $Z=\frac{z}{2}$. It is an easy observation that if $x$,$y$ and $z$ are even and $(x,y,z) \neq (2,2,2)$, then $x,y>2$ and therefore $z>2$. (see proof of \cite[Lemma 2.2]{Ma}).
In this section, we will prove the following lemma, which is essential in the proof of theorems \ref{main thereom} and \ref{main2}.
\begin{lemma} \label{karan}
Let $(x,y,z)$ be an exceptional solution of  Je\'{s}manowicz' conjecture, then
$$ Y < \frac{\log (n)}{\log 3} \quad \text {or} \quad Y < \frac{\log 2(m-1)}{(\alpha+1) \log 2} $$
where $\alpha$ is as defined in \ref{alpha} , moreover if $m >1.22 n$ then $x<y<z $.
\end{lemma}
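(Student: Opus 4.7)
My plan is to exploit the primitive Pythagorean structure of $(a^X,b^Y,c^Z)$, which follows at once from $a^{2X}+b^{2Y}=c^{2Z}$ and the pairwise coprimality of $a$, $b$, $c$: there exist coprime positive integers $M>N$ of opposite parity with
\[
a^X = M^2-N^2,\qquad b^Y = 2MN,\qquad c^Z = M^2+N^2.
\]
Writing $m=2^\alpha i$ (the case where $n$ is even is entirely symmetric), the identity $2MN=(2mn)^Y=2^{Y(\alpha+1)}i^Y n^Y$ together with $\gcd(M,N)=1$ forces the whole power of $2$ onto the even member. Since $i^Y n^Y$ is a $Y$th power whose coprime partition between $M$ and $N$ must itself consist of $Y$th powers, I obtain
\[
\{M,N\}=\{2^{Y(\alpha+1)-1}U_1^Y,\ V_1^Y\},\qquad U_1 V_1 = in,\qquad \gcd(U_1,V_1)=1,
\]
with $U_1,V_1$ odd. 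A reduction of $c^Z\pm a^X$ modulo $4$ determines which member is even and fixes the parity of $X$.

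The next step is to rule out the degenerate sub-cases $V_1=1$ (i.e.\ $N=1$) or $U_1=1$ (which would give $M=1<N$). If $N=1$, then $a^X = (M-1)(M+1)$ factors into coprime odd integers, each of which must therefore be an $X$th power; since two $X$th powers cannot differ by $2$ for $X\ge 2$ (an elementary factorization argument), this is impossible for exceptional solutions. Hence the odd base attached to $N$ (or to $M$) in the arrangement above is at least $3$, yielding the lower bound $3^{2Y}$ on the corresponding square. Combining this with the basic inequalities
\[
c^Z+a^X\ \ge\ c+a\ =\ 2m^2,\qquad c^Z-a^X\ \le\ b^Y
\]
(the latter from $(c^Z-a^X)(c^Z+a^X)=b^{2Y}$ and $c^Z\ge b^Y$), and substituting $M^2=(c^Z+a^X)/2$, $N^2=(c^Z-a^X)/2$, I obtain on the side carrying the large power of $2$ an inequality of the form $2^{2Y(\alpha+1)-1}U_1^{2Y}\ge 2m^2$. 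If the odd base there equals $1$, this reads $2^{Y(\alpha+1)}\ge 2m>2(m-1)$, negating the second alternative; the upper inequality then forces $(in)^Y\le 2^{Y(\alpha+1)-1}$, and a refinement via the Gaussian integer form described below delivers $3^Y<n$. If instead the odd base is $\ge 3$, then $U_1\le in$ combined with $U_1^{2Y}\ge m^2/2^{2Y(\alpha+1)-2}$ again forces one of the two alternatives.

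For the ordering statement, the plan is to factor in $\mathbb Z[i]$. Since $(m,n)$ is primitive, $m+ni$ and $m-ni$ are coprime in $\mathbb Z[i]$, so unique factorization of $M+Ni$ (of norm $c^Z$) together with $\gcd(M,N)=1$ forces $M+Ni=u(m+ni)^Z$ for some unit $u$---any real factor $c^t$ would divide both $M$ and $N$. Squaring and reading off real and imaginary parts gives, in polar form with $\theta=\arctan(n/m)$,
\[
a^X=c^Z|\cos 2Z\theta|,\qquad b^Y=c^Z|\sin 2Z\theta|.
\]
The bound on $Y$ already obtained, combined with the hypothesis $m>1.22\,n$ which restricts $\theta$ to a fixed sector bounded away from $\pi/4$, then lets me compare $|\cos 2Z\theta|$ and $|\sin 2Z\theta|$ against suitable powers of $a/c$ and $b/c$ to conclude $X<Y<Z$. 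The hard part will be the bookkeeping in the corner sub-case where all of $in$ concentrates on one side of the Pythagorean parameterization and the triple essentially degenerates into a power of $2$ plus a single $Y$th power; it is precisely there that the explicit constants $\log n/\log 3$ and $\log(2(m-1))/((\alpha+1)\log 2)$ arise.
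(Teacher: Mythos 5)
Your opening step (the primitive parameterization $a^X=M^2-N^2$, $b^Y=2MN$, $c^Z=M^2+N^2$ and the observation that the coprime splitting of $2MN=(2mn)^Y$ consists of $Y$-th powers) matches the paper, but the proof then goes off the rails at the Gaussian-integer step, and neither of the two bounds on $Y$ is actually derived. The claim that $M+Ni=u(m+ni)^Z$ is false whenever $c=m^2+n^2$ is composite: the coprime factors $M+Ni$ and $M-Ni$ of $c^Z$ can mix the conjugate prime ideals above the distinct prime divisors of $c$, so all one gets (using that $Z$ is odd, via the generalized Fermat results) is $M+Ni=(a_1+b_1i)^Z$ for \emph{some} $a_1^2+b_1^2=c$ --- and the paper shows that necessarily $a_1b_1\neq mn$, so that $c$ cannot even be a prime power and your identity fails in every case where the lemma is non-vacuous. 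This is not a cosmetic issue: the entire content of the lemma lives in the descent to the small parameters $a_1,b_1$ of size $O(\sqrt{c})$. In your setup the power $2^{(\alpha+1)Y-1}$ divides the even member of $\{M,N\}$, which has size roughly $c^{Z/2}$, so no comparison with $m$ is possible; the paper instead shows $\operatorname{ord}_2(b_1)=(\alpha+1)Y-1$ with $b_1\leq\sqrt{c}$, and the dichotomy $b_1<m$ versus $b_1>m$ (whence $a_1<n$) is what produces the two alternatives. Your sub-case ``odd base $=1$'' even derives $2^{(\alpha+1)Y}\geq 2m$, which is the \emph{negation} of the second alternative, and the promised ``refinement delivering $3^Y<n$'' is never supplied. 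In the paper that bound requires three ingredients absent from your proposal: $p^Y\,\|\,k$ for each prime $p\mid a_1$ (from $kl=2^{(\alpha+1)Y-1}(in)^Y$ and coprimality), the valuation identity $\operatorname{ord}_p(k)=\operatorname{ord}_p(a_1)+\operatorname{ord}_p(Z)$, and $\gcd(z,15)=1$ from Proposition 2.3, which kills $\operatorname{ord}_p(Z)$ for $p\in\{3,5\}$.

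The ordering statement is also not proved. Comparing $\abs{\cos 2Z\theta}$ with $\abs{\sin 2Z\theta}$ requires controlling $2Z\theta \bmod \pi$, which is exactly the linear-forms-in-logarithms problem the paper defers to Section 4; it cannot be settled by noting that $\theta$ lies in a fixed sector, since multiplication by $2Z$ destroys that information. Moreover $a^X<b^Y$ would not by itself yield $x<y$. The paper's route is elementary and different: $x\neq z$ via a $2$-adic valuation of $(c^2)^X-(a^2)^X$, hence $\abs{x-z}\geq 4$; then $x<z$ follows by combining $X\log a\leq (X-2)\log c$ with the already-established bound on $Y$ and a small numerical check using $m>1.22n$; and $z<y$ follows from $(a^2+b^2)^Z>a^{2Z}+b^{2Z}$. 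Your elimination of $N=1$ by the ``two $X$-th powers cannot differ by $2$'' argument is fine as far as it goes, but it is not the analogue of the paper's key step (ruling out $a_1=1$, which uses Miyazaki's Lemma \ref{2alpha}); the difficulty you have not addressed is the descent from $(M,N)$ to $(a_1,b_1)$.
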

\subsection{Congruence conditions of soloutions} \label{sec:1.2}
Based on our observation of the values of $x$,$y$, and $z$ of an exceptional solution, we can consider the exceptional solutions of Je\'{s}manowicz' conjecture as solutions of generalized Fermat equations of the shape:
$$ A^{p} +B^{q}= C^{r} \quad \frac{1}{p}+\frac{1}{q}+\frac{1}{r} <1 $$
Where $A$, $B$, and $C$ are coprime, and $X_i$s are positive integers. The results on the solutions of such type of the Diophantine equations can be very helpful in determining congruence conditions on exceptional solutions. We quote some of these results.
\begin{proposition} \cite [Theorem1]{Ben1}
There are no solutions in coprime integers $A, B, C$ to the equation $A^4+B^2=C^N$ with  $N \geq 4$
\end{proposition}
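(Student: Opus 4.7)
The approach is the modular method for ternary Diophantine equations of signature $(4,2,N)$. First I would reduce the general case $N \geq 4$ to $N = 4$ and $N = p$ prime with $p \geq 3$ by absorbing composite factors of $N$ into the base $C$. The case $N = 4$ is the equation $B^2 = C^4 - A^4$, settled classically by Fermat's infinite descent: assuming a minimal nontrivial coprime solution, coprime factorization of $(C^2 - A^2)(C^2 + A^2) = B^2$ produces a strictly smaller solution, contradicting minimality. The small prime cases $p = 3$ and $p = 5$ reduce to determining rational points on explicit low-genus curves (a Mordell-type analysis for $p = 3$, a twist of a genus-$2$ curve for $p = 5$), tractable by Chabauty or explicit descent on an elliptic quotient.

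For a prime exponent $p \geq 7$, assume a primitive coprime solution $(A,B,C)$ exists and attach to it a Frey curve. For the signature $(4, 2, p)$ the standard Frey curve over $\mathbb{Q}$ is insufficient, so I would follow Ellenberg's construction and use a Frey $\mathbb{Q}$-curve $E$ defined over $\mathbb{Q}(i)$, $2$-isogenous to its Galois conjugate, with the precise model chosen according to the $2$-adic valuations of $A$ and $B$ (giving a short list of sub-cases). Using coprimality together with the $p$-th power on the right-hand side, one checks via Tate's algorithm that the conductor of $E$ is supported on $2$ and a fixed short list of small primes, independent of $C$. The modularity theorem for $\mathbb{Q}$-curves (Ribet, Khare-Wintenberger) then attaches a weight-$2$ cuspidal newform $f$ whose mod-$p$ Galois representation is isomorphic to $\bar{\rho}_{E,p}$, and Ribet-style level-lowering places $f$ at a small explicit level.

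The final step is to rule out every candidate newform at each predicted level. For each $f$, I would compute a few traces of Frobenius $a_\ell(f)$ at auxiliary small primes $\ell$ and compare them, modulo $p$, with $a_\ell(E)$; by the Weil bound both lie in $[-2\sqrt{\ell}, 2\sqrt{\ell}]$, so the congruence $a_\ell(f) \equiv a_\ell(E) \pmod{\mathfrak{p}}$ forces equality for $p$ sufficiently large, which is incompatible with the local reduction data of $E$ at $\ell \mid ABC$. The main obstacle is the residual small-prime range $p \in \{7, 11, 13\}$, where the predicted level is small enough that genuine newforms exist and no crude Weil-bound argument suffices; these cases require finer local tools such as Kraus's method, analysis of the image of inertia at $2$, or a symplectic-antisymplectic dichotomy on the $p$-torsion to close out.
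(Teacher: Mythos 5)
The paper offers no proof of this proposition at all: it is quoted directly from Bennett, Ellenberg and Ng \cite{Ben1}, so the only meaningful comparison is with the proof in that reference, whose overall architecture your sketch does track (classical Fermat descent for $N=4$, Ellenberg's Frey $\mathbb{Q}$-curve over $\mathbb{Q}(i)$ that is $2$-isogenous to its Galois conjugate, modularity of $\mathbb{Q}$-curves, level-lowering, and elimination of newforms at small levels). However, your very first step --- the reduction of the exponent --- contains a genuine error. You reduce $N\geq 4$ to $N=4$ and to primes $p\geq 3$, and claim the case $p=3$ is settled by a Mordell-type analysis showing no solutions exist. This is false: the signature $(4,2,3)$ is spherical, since $\frac{1}{4}+\frac{1}{2}+\frac{1}{3}>1$, and the equation $A^4+B^2=C^3$ has infinitely many coprime solutions, the smallest being $3^4+46^2=13^3$. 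No analysis can show it has none. Consequently your reduction also silently loses every exponent $N\geq 4$ divisible neither by $4$ nor by a prime $\geq 5$, namely $N=2\cdot 3^k$ and $N=3^k$, which descend only to the base cases $N=6$ and $N=9$. These are precisely the cases where the cited proof must argue separately, exploiting that the cube $C^{N/3}$ is additionally a perfect square or cube and working with the finitely many parametrized families of spherical $(4,2,3)$ solutions, followed by descent and Chabauty-type computations on the resulting curves. As written, your proposal proves nothing for $N=6$ or $N=9$, and asserts a false statement for $N=3$.

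A secondary weakness lies in the elimination step for prime $p\geq 7$. The levels predicted by level-lowering carry newforms with complex multiplication by $\mathbb{Q}(i)$ (attached to Gr\"ossencharacters of $\mathbb{Z}[i]$), and these cannot be removed by the Weil-bound trace comparison you describe, for any $p$ however large; disposing of the CM forms is the heart of Ellenberg's argument, is what originally produced his restriction $p\geq 211$, and is what Bennett, Ellenberg and Ng had to refine (via the factorization $(B+iA^2)(B-iA^2)=C^p$ in $\mathbb{Z}[i]$ and an analysis of inertia at $2$) to reach all $p$ in range, with the residual small exponents handled by entirely different, Chabauty-style techniques. Your sketch instead locates the whole difficulty at $p\in\{7,11,13\}$ and gestures at Kraus-type or symplectic tools, which misplaces the obstruction: the naive trace argument already fails against the CM forms for every $p$, so the sketch as given does not close the prime-exponent case either.
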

From this proposition, we can conclude that $X=\frac{x}{2}$ and $Y=\frac{y}{2}$ are both odd.

 \begin{proposition}  \cite [Theorem3]{Cao2}
If $N \in \mathbb{N}$ with $N >1$, then the Diophantine equation
$$ A^{2N}+B^2=C^4 , \quad  A,B,C \in \mathbb{Z},   (A,B)=1, 2 \mid A $$
has no integral solutions.
 \end{proposition}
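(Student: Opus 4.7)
The plan is to combine two successive primitive Pythagorean parametrisations with two classical non-existence theorems: Fermat's infinite descent for $x^{4}-y^{4}=z^{2}$ (to handle the boundary case $N=2$) and the Darmon--Merel theorem that, for every prime $\ell \ge 3$, the equation $x^{\ell}+y^{\ell}=2z^{\ell}$ admits no coprime integer solutions beyond the trivial $|x|=|y|=|z|$.

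First I would observe that it is harmless to assume $N$ is prime: if $N=\ell m$ with $\ell$ prime, then any hypothetical $(A,B,C)$ solving $A^{2N}+B^{2}=C^{4}$ gives $(A^{m},B,C)$ solving $(A^{m})^{2\ell}+B^{2}=C^{4}$ with the same coprimality and parity hypotheses. For $N=2$ the equation is $C^{4}-A^{4}=B^{2}$, which Fermat's classical infinite descent rules out in positive integers. So from now on $N$ is an odd prime.

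Since $2\mid A$ and $\gcd(A,B)=1$, both $B$ and $C$ are odd, and the equation reads as the primitive Pythagorean identity $(A^{N})^{2}+B^{2}=(C^{2})^{2}$. Standard parametrisation gives coprime $r>s>0$ of opposite parity with
\[
 A^{N}=2rs, \qquad B=r^{2}-s^{2}, \qquad C^{2}=r^{2}+s^{2}.
\]
Applying the Pythagorean parametrisation a second time to $r^{2}+s^{2}=C^{2}$ yields coprime $p>q>0$ of opposite parity with $\{r,s\}=\{p^{2}-q^{2},\,2pq\}$ and $C=p^{2}+q^{2}$; either labelling leads to
\[
 A^{N}=4\,p\,q\,(p-q)(p+q).
\]
The four integers $p,\,q,\,p-q,\,p+q$ are pairwise coprime apart from the single power of $2$ absorbed into whichever of $p$ or $q$ is even, so after stripping that power each is an $N$-th power of a positive odd integer. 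Writing $p-q=c^{N}$ and $p+q=d^{N}$ with coprime positive odd integers $c<d$, addition and subtraction give
\[
 c^{N}+d^{N}=2p, \qquad d^{N}-c^{N}=2q.
\]

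Now I would dispose of both parity subcases by invoking Darmon--Merel. If $q$ is even, then $p$ is odd and itself an $N$-th power $a^{N}$, so $c^{N}+d^{N}=2a^{N}$ forces $c=d=a$, hence $q=0$, contradicting $q>0$. If instead $p$ is even, then $q=b^{N}$ for an odd $b$ and $d^{N}-c^{N}=2b^{N}$; using that $N$ is odd this is the same as $d^{N}+(-c)^{N}=2b^{N}$, and Darmon--Merel again gives $|c|=|d|$, so $q=0$. Either way the proposition follows.

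The only non-trivial obstacle I anticipate is the $2$-adic bookkeeping: one must verify that after extracting the single power of $2$ from $4pq(p-q)(p+q)$ (which lives entirely in $p$ or in $q$), the remaining four odd factors are indeed pairwise coprime and together constitute the odd part of $A^{N}$, so that each is legitimately an $N$-th power of a positive integer and no parasitic $2$-torsion obstructs the reduction. Once this bookkeeping is in place, both odd-prime subcases collapse into the Darmon--Merel equation (after the sign trick for the $p$-even branch, which is valid precisely because $N$ is odd), and together with Fermat's descent for $N=2$ this completes the proof.
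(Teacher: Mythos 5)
The paper itself offers no proof of this proposition: it is imported as a black box, cited to Cao and Dong \cite{Cao2}, and used only to conclude that $Z=z/2$ is odd. So there is no internal argument for you to diverge from, and the relevant question is whether your blind proof stands on its own. It does. Your route --- reduce to prime exponent, kill $\ell=2$ by Fermat's descent on $x^4-y^4=z^2$, and for odd prime $N$ apply the Pythagorean parametrisation twice to reach $A^N=4pq(p-q)(p+q)$, then collapse to D\'enes' equation $x^N+y^N=2z^N$ and invoke Darmon--Merel --- is the standard modern argument for statements of this type, and it is sound. The $2$-adic bookkeeping you flag as the only obstacle resolves exactly as you anticipate: $p\pm q$ are both odd with $\gcd(p-q,p+q)\mid 2$, hence equal to $1$, so the entire power of two, namely $2^{N v_2(A)-2}\geq 2$, sits inside the single even member of $\{p,q\}$, and only the four odd, pairwise coprime cofactors need to be $N$-th powers --- which is all you use.

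Three small points of precision. First, as literally stated the equation admits $(A,B,C)=(0,\pm 1,\pm 1)$; both the cited statement and your proof tacitly assume $ABC\neq 0$, which your first parametrisation already requires. Second, in the branch with $q$ even you should record $c\neq d$ (equivalent to $q>0$) and note that two distinct odd positive integers force $|cda_1|>1$, so the solution is nontrivial in the precise sense of Darmon--Merel; their theorem then gives the contradiction. Third, in the branch with $p$ even, Darmon--Merel does not literally return $|c|=|d|$: for $(d,-c,b)$ with $c,d,b>0$ the trivial solutions $x=y=z$ are unattainable outright, so you get an immediate contradiction --- the same conclusion, stated more cleanly. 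None of these affects the validity of the argument.
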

From this proposition, we can conclude that $Z=\frac{z}{2}$ is odd.
\begin{proposition} \cite[Theorem1]{Ben2} \label{z,15}
Suppose that $(p,q,r)$ are positive integers with $ \frac{1}{p}+\frac{1}{q}+\frac{1}{r} <1 $ and 
$(p,q,r) \in \{ (2,n,6),(2,2n,10) \}$ for some integer $n$. Then the equation $A^x+B^y=C^z$ has no solutions in coprime nonsero integers $A$,$B$ and $C$.
\end{proposition}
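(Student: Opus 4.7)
My plan is to factor
\[
(c^Z - a^X)(c^Z + a^X) = c^{2Z} - a^{2X} = b^{2Y} = (2mn)^{2Y},
\]
which makes sense because the propositions recorded in Subsection~\ref{sec:1.2} already force $X$, $Y$, and $Z$ to be odd (and hence each $\geq 3$, since $x,y,z > 2$). First I would determine the $2$-adic valuations of both factors. Writing $m = 2^\alpha i$ with $i$ odd and $\alpha \geq 2$ (we are in the regime $4 \mid m$), and using that $n$ is odd, one gets $a \equiv 3, c \equiv 1 \pmod 4$; since $X, Z$ are odd this forces $v_2(c^Z - a^X) = 1$, and hence $v_2(c^Z + a^X) = 2Y(\alpha+1) - 1$. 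Together with $\gcd(c^Z - a^X, c^Z + a^X) \mid 2$, I can then write
\[
c^Z - a^X = 2U, \qquad c^Z + a^X = 2^{2Y(\alpha+1)-1} V,
\]
where $U, V$ are odd, $\gcd(U,V) = 1$, and $UV = (in)^{2Y}$.

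Since $UV$ is a $2Y$th power and $\gcd(i, n) = 1$, each prime power of $in$ lands entirely in $U$ or entirely in $V$, so $U = (n_U i_U)^{2Y}$ and $V = (n_V i_V)^{2Y}$ for coprime factorizations $n = n_U n_V$ and $i = i_U i_V$. The key case split is on whether $n \mid U$ (Scenario~I: $n_V = 1$) or $n \mid V$ (Scenario~II: $n_U = 1$); mixed cases should reduce to one of these with at worst weaker constants. The degenerate possibilities $U = 1$ (which would force $c^Z - a^X = 2$, impossible because $c - a = 2n^2 \geq 32$ and $X, Z \geq 3$) and $V = 1$ (incompatible with the trivial lower bound on $c^Z + a^X$) are discarded directly.

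In Scenario~I, $c^Z - a^X = 2 (n i_U)^{2Y}$. The standard size argument ($c > a$ and $c^Z > a^X$) gives $X \leq Z$, and then the telescoping lower bound
\[
c^Z - a^X \geq c^X - a^X = (c-a) \sum_{j=0}^{X-1} c^{X-1-j} a^j \geq 2 n^2 \cdot X \cdot a^{X-1}
\]
kicks in. Comparing this with $2 (n i_U)^{2Y} \leq 2 n^{2Y} i^{2Y}$, using $X \geq 3$ to pull out a factor $3 a^{X-1} \geq 3 a^2$, and rearranging, I expect the inequality to collapse to $3^Y \leq n$, giving the first bound $Y < \log n / \log 3$. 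In Scenario~II, $c^Z + a^X = 2^{2Y(\alpha+1) - 1}(n i_V)^{2Y} \geq 2^{2Y(\alpha+1)-1} n^{2Y}$, and the elementary bound $c^Z + a^X \leq 2 c^Z \leq 2(2m^2)^Z$, combined with the complementary estimate $c^Z - a^X \leq 2 i^{2Y}$ (which forces $a^X$ close to $c^Z$), yields after some algebra $2^{(\alpha+1)Y} < 2(m-1)$, equivalent to the second bound.

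For the \emph{moreover} clause, the hypothesis $m > 1.22\, n$ gives a quantitative lower bound on $(m^2+n^2)/(2mn)$, hence on $\log c - \log b$. Combined with the smallness of $Y$ from the first part, this makes $Z \log c > Y \log b$ hold with enough slack to force the strict integer inequality $Z \geq Y + 1$, i.e.\ $y < z$. A parallel size comparison using $a^X + b^Y = c^Z$ together with the bound on $Y$ then forces $X < Y$, i.e.\ $x < y$. The main obstacle will be the case analysis: extracting the exact constants $\log 3$ in the first bound and $(\alpha+1)\log 2$ in the second requires careful bookkeeping of how the primes of $n$ and $i$ split between $U$ and $V$, and the mixed scenarios need to be shown to fall back to one of the two main cases. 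A secondary subtlety is the moreover clause, where the constant $1.22$ must be tight enough to guarantee both strict integer inequalities $x < y$ and $y < z$ simultaneously.
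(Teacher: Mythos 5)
Your proposal does not address the statement it is meant to prove. Proposition \ref{z,15} is a quoted theorem of Bennett, Chen, Dahmen and Yazdani \cite{Ben2} about generalized Fermat equations of signature $(2,n,6)$ and $(2,2n,10)$ in \emph{arbitrary} coprime nonzero integers $A$, $B$, $C$: its hypotheses contain no Pythagorean triple, no parameters $m$ and $n$, no parity assumptions, and no notion of exceptional solution. The paper gives no proof of it --- it is invoked as a black box --- and the actual proof in \cite{Ben2} proceeds by the modular method (Frey--Hellegouarch curves, modularity and level lowering of Galois representations, supplemented by descent and Chabauty-type computations for small exponents). An elementary factorization of $c^{2Z}-a^{2X}=b^{2Y}$ with $2$-adic bookkeeping has no purchase on, say, $A^{2}+B^{n}=C^{6}$ for general $n$; families of this kind are precisely those that resisted elementary congruence and size arguments and required modularity machinery.

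What you have sketched is instead essentially the paper's Lemma \ref{karan} --- the dichotomy $Y<\log n/\log 3$ or $Y<\log\bigl(2(m-1)\bigr)/\bigl((\alpha+1)\log 2\bigr)$ together with $x<y<z$, down to the constant $1.22$. Beyond being aimed at the wrong target, this is circular relative to the paper's logic: Lemma \ref{karan} is derived \emph{from} Proposition \ref{z,15}, which supplies $\gcd(z,15)=1$ and thereby makes the primes $p\in\{3,5\}$ dividing $a_1$ yield $\operatorname{ord}_p(a_1)\geq Y$ and hence $3^{Y}<n$; so no argument built on that lemma's machinery can establish the proposition. (Even read as a sketch of Lemma \ref{karan}, the proposal has gaps: the ``mixed cases'' of how the primes of $in$ split between $U$ and $V$ are the whole content of the case analysis and are waved away, whereas the paper sidesteps this by working in $\mathbb{Z}[i]$, writing $k+li=(a_1+b_1 i)^{Z}$ and splitting on $b_1<m$ versus $b_1>m$.) For the proposition itself, the only available route is the modular-method proof of \cite{Ben2}, and the correct move is simply to cite it.
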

From this proposition, we conclude that $gcd(z,15)=1$.
\subsection{Proof of lemma \ref{karan}} \label{sec:1.3}
Let $(x,y,z)$ be an exceptional solution. Then $a^X,b^Y$, and $c^Z$ form a primitive Pythagorean triple. So there exist positive integers $k$, and $l$ with $k > l$, $ k \not \equiv l \pmod 2 $ and $(k,l)=1$ such that:
$$ a^X= k^2-l^2, \quad b^Y= 2kl \quad c^Z=k^2+l^2. $$
\begin{lemma}
$z<2y$ and $z<2x$
\end{lemma}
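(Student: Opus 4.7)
My plan is to exploit the Pythagorean parametrization
$$a^X = k^2-l^2,\quad b^Y = 2kl,\quad c^Z = k^2+l^2$$
set up just before the lemma. I would first observe that $k\ge 2$ and $l\ge 1$: since $y\ge 2$ we have $b^Y\ge b\ge 2$, so $2kl\ge 2$ forces $kl\ge 1$, and combined with $k>l$ this gives $l\ge 1$ and $k\ge 2$.

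The proof then reduces to two elementary inequalities between the legs and the hypotenuse of the triple $(a^X,b^Y,c^Z)$, namely $(2kl)^2 > k^2+l^2$ and $(k^2-l^2)^2 > k^2+l^2$. The first follows from $l<k$: indeed $4k^2l^2 \ge 4k^2 > k^2+l^2$, as $l^2<k^2$. The second follows from the factorization $(k^2-l^2)^2 = (k-l)^2(k+l)^2 \ge (k+l)^2 = k^2+2kl+l^2 > k^2+l^2$, using $k-l\ge 1$ and $kl\ge 1$.

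Translated back into the original variables, these two inequalities say exactly $b^{2Y}>c^Z$ and $a^{2X}>c^Z$. Taking logarithms yields $2Y\log b > Z\log c$ and $2X\log a > Z\log c$; since $a<c$ and $b<c$, we have $\log a/\log c<1$ and $\log b/\log c<1$, so $Z<2Y$ and $Z<2X$, i.e.\ $z<2y$ and $z<2x$. I do not foresee any real obstacle: the argument is purely an elementary manipulation of the Pythagorean parametrization, and the main ``work'' is just identifying which two quantities to compare on each side.
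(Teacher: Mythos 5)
Your proposal is correct and follows essentially the same route as the paper: the paper likewise observes that for the primitive Pythagorean triple $(a^X, b^Y, c^Z)$ each leg squared exceeds the hypotenuse, i.e.\ $c^Z < b^{2Y}$ and $c^Z < a^{2X}$, and then uses $c>a$ and $c>b$ to conclude $Z<2Y$ and $Z<2X$. You merely supply the elementary verification of those two inequalities via the $(k,l)$ parametrization, which the paper takes for granted.
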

\begin{proof}
Since $a^X,b^Y$ and $c^Z$ form a primitive Pythagorean triple. $c^Z < b^{2Y}$ and $c^Z < a^{2X}$, but $ c>a $ and $c >b$. Therefore $Z< 2Y$ and $Z< 2X.$ \qed
\end{proof}
We can factorize $c^z=k^2+l^2 $ in the ring $ \mathbb{Z}[i] $ then we have 
$$ c^Z = \left (k+li \right ) \left( k-li \right) $$
Since $c$ is odd and $(k,l)=1$, the factors on the right-hand side are relatively prime in $ \mathbb{Z}[i] $. Since $Z$ is odd, there exist nonzero integers $a_1$ and $b_1$ such that 
$$ k+li = \left ( a_1+b_1 i \right)^Z $$
$a_1^2+b_1^2=m^2+n^2=c$, $a_1 \not \equiv b_1 \pmod 2 $, $k= \Re {\left ( a_1+b_1 i \right)^Z }$ and $l= \Im {\left ( a_1+b_1 i \right)^Z }$.
it means that:
\begin{equation} \label{k}
k= a_{1}\left(a_{1}^{z-1}-\left(\begin{array}{c} Z \\ Z-2 \end{array}\right) a_{1}^{Z-3} b_{1}^{2}+\cdots \mp \left(\begin{array}{c} Z \\ 3 \end{array}\right) a_{1}^{2} b_{1}^{Z-3} \pm Z b_{1}^{Z-1}\right), 
\end{equation}
 \begin{equation} \label{l}
  l=b_{1}\left(Z a_{1}^{Z-1}-\left(\begin{array}{c} Z \\ Z-3 \end{array}\right) a_{1}^{Z-3} b_{1}^{2}+\cdots \mp \left(\begin{array}{l} Z \\ 2 \end{array}\right) a_{1}^{2} b_{1}^{Z-3} \pm b_{1}^{Z-1}\right).
 \end{equation}
  From these equations, it is clear that $a_1 \mid k$ and $ b_1 \mid l$. Since $ a_1 \not \equiv b_1 \pmod 2 $ and $Z$ is odd, $k/a_1$ and $l/b_1$ are odd. Assume $b_1$ is even (The argument is exactly the same when $a_1$ is even.), then $l$ is even, $k$ is odd and by \ref{l} $ Ord_2 \left ( b_1\right )=Ord_2 \left( l \right) $. Therefore, 
$$ \left( \alpha+1 \right) Y = Ord_2 (b^Y)= Ord_2 (2kl)= Ord_2(2b_1) $$
Since $Ord_2(b_1) > Ord_2(mn)$, we have $b_1 \neq m$ and $b_1 \neq n$. 
\begin{remark}
$c=m^2+n^2=a_1^2+b_1^2$, and $mn \neq a_1b_1$. Therefore, $c$ can not be a prime power.
\end{remark}
We consider two cases: 
\begin{itemize}
\item[•] $b_1<m$:
Since $ Ord_2(2b_1) = \left( \alpha+1 \right) Y$ we have $Y < \frac{\log (2(m-1))}{(\alpha+1) \log(2)}$
\item[•] $b_1>m$: Assume $ b_1 >m$ then $ a_1<n $. we will show that $a_1 \geq 3^Y $.
\end{itemize}
\begin{lemma}
$a_1 \neq 1.$
\end{lemma}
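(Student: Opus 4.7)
The plan is to derive a contradiction from the assumption $a_1 = 1$ by combining a $2$-adic lower bound on $b_1$ with a real-analytic upper bound.

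For the lower bound, I would take $2$-adic valuations in $b^Y = 2kl$: since $a_1=1$ is odd, formula \eqref{k} shows $k$ is odd, while formula \eqref{l} gives $v_2(l) = v_2(b_1)$. Using $v_2(b) = \alpha+1$, this yields $Y(\alpha+1) = 1 + v_2(b_1)$, whence $b_1 \ge 2^{Y(\alpha+1)-1}$.

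For the upper bound, I would square the Gaussian identity $k + li = \epsilon (1 + b_1 i)^Z$ (with $\epsilon$ a Gaussian unit) to obtain $(k+li)^2 = \pm (A + Bi)^Z$, where $A = 1 - b_1^2$, $B = 2 b_1$, and $|A+Bi| = c$. Setting $\varphi = \arctan\bigl(2 b_1 / (b_1^2 - 1)\bigr)$, one has $\arg(A+Bi) = \pi - \varphi$, and since $Z$ is odd, $\arg\bigl((A+Bi)^Z\bigr) \equiv \pi - Z\varphi \pmod{2\pi}$. The positivity of both $\Re((k+li)^2) = a^X$ and $\Im((k+li)^2) = b^Y$ then forces $Z\varphi \pmod \pi \in (\pi/2, \pi)$. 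Applying the elementary inequality $\arctan t \le t$ gives $Z\varphi \le 2Z/(b_1-1)$, which is strictly below $\pi/2$ as soon as $b_1 > 4Z/\pi + 1$; hence $b_1 \le 4Z/\pi + 1 < 2Z$.

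Combining with $Z < 2Y$ from the preceding lemma produces $b_1 < 4Y$, in direct conflict with $b_1 \ge 2^{Y(\alpha+1)-1}$: since $Y$ is odd with $Y > 1$ we have $Y \ge 3$, and the standing assumption $4 \mid mn$ gives $\alpha \ge 2$, so the lower bound is at least $2^8 = 256$ while the upper bound is at most $12$ in the critical case $Y = 3$, with the exponential gap only widening thereafter. The main technical obstacle is the trigonometric step: carefully translating the sign constraints on $(k+li)^2$ into the arc condition on $Z\varphi$ and extracting the clean linear upper bound $b_1 \lesssim Z$; the $2$-adic computation and the final numerical contradiction are then routine.
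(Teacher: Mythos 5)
Your proposal is correct in its essentials, but it is a genuinely different argument from the one in the paper, which stays entirely $2$-adic: assuming $a_1=1$ (and, say, $m$ even), the paper writes $b_1^2=m^2+(n^2-1)$, uses $\operatorname{ord}_2(b_1)=(\alpha+1)Y-1>\alpha=\operatorname{ord}_2(m)$ to force $\operatorname{ord}_2(n^2-1)=\operatorname{ord}_2(m^2)=2\alpha$, and, since $\operatorname{ord}_2(n^2-1)=\beta+1$ by \eqref{alpha}, lands exactly in the configuration $2\alpha=\beta+1$ that is excluded by Lemma \ref{2alpha}. You instead pit the same $2$-adic lower bound $b_1\geq 2^{(\alpha+1)Y-1}\geq 2^{8}$ (valid since $\alpha\geq 2$ and $Y\geq 3$) against an archimedean upper bound $b_1<4Z/\pi+1<4Y$ obtained from the angle of $(k+li)^2=a^{X}+b^{Y}i$ --- precisely the Cipu--Mignotte device that the paper only deploys later, in Section \ref{section4}. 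The trade-off: the paper's proof is two lines but leans on the imported Lemma \ref{2alpha}; yours is self-contained and quantitatively stronger, at the cost of the trigonometric bookkeeping. The one step you must tighten is the claim that positivity of $\Re\bigl((k+li)^2\bigr)=a^{X}$ and $\Im\bigl((k+li)^2\bigr)=b^{Y}$ forces $Z\varphi \bmod \pi\in(\pi/2,\pi)$: this holds for $b_1>0$ and any unit $\epsilon$, but if you simultaneously allow $b_1<0$ and $\epsilon^2=-1$, the relevant residue can land in $(0,\pi/2)$ and the lower bound $Z\lvert\varphi\rvert>\pi/2$ evaporates. The repair is easy: since $Z$ is odd, every Gaussian unit is a $Z$-th power, so you may normalize $\epsilon=1$ as the paper does, and then both signs of $b_1$ yield $Z\lvert\varphi\rvert>\pi/2$ (indeed $>\pi$ when $b_1<0$); alternatively, Remark \ref{nokte} confines $\arg\bigl((k+li)^2\bigr)$ to $(\pi/4,\pi/2)$ and the argument survives with $\pi/4$ in place of $\pi/2$. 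With that case analysis completed, your numerical contradiction ($256$ versus $12$ at $Y=3$, with the gap growing exponentially in $Y$) is valid.
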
 
\begin{proof}
Assume $a_1=1$ and $m$ is even (the proof for $n$ even is exactly the same). Then $ b_1^2= m^2 +(n^2-1) $. Considering the 2-adic valuation of $b_1$ and $m$, we can conclude that $ Ord_2 (m^2) = Ord_2 (n^2-1)$. On the other hand, from \eqref{alpha},
we have $ Ord_2( m^2 )= 2 \alpha $ and $ Ord_2 (n^2-1) = \beta+1$. From lemma \ref{2alpha} if $\beta+1 = 2 \alpha$ , we don't have any exceptional solution.  \qed
\end{proof}
Let $p$ be a prime divisor of $a_1$. Remember that  $a_1$ is odd, bigger than 1 ,and $a_1 \mid k$. Consider the equation \eqref{k}. It is proved in \cite[p.~311-312]{Miy2} that
$$ \operatorname{ord}_{p}\left(\left(\begin{array}{c} Z \\ Z-i \end{array}\right) a_1^{i-1} \right)>\operatorname{ord}_{p}(Z) $$ 
therefore,
  $$ \operatorname{ord}_{p}(k) = \operatorname{ord}_{p}(a_1)+\operatorname{ord}_{p}(Z). $$  
  if $ p \in \{ 3,5 \} $ from proposition \ref{z,15}, we have $\operatorname{ord}_{p}(a_1) =\operatorname{ord}_{p}(k) \geq Y$, so $3^Y <n$. Let $ p >5$ , $\operatorname{ord}_{p}(Z) \leq \frac{\log (Z)}{\log p} < \frac{\log 2Y}{\log p}$. Thus, if $p$ divides $a_1$ and $p>5$, we obtain $p^{Y-\log(2Y)/ \log(p)} <n$. However, for every $p>5$ and $Y>1$, $3^Y<p^{Y-\log(2Y)/ \log(p)}$ . This completes the proof of the first part of lemma \ref{karan}.
To complete the proof of lemma \ref{karan}, we need to compare the values of $x$, $y$ and $z$ of an exceptional solution. Note that these relations have already been proved in \cite{Miyasl} but for the sake of completeness and also have the more accurate constants in our case, we will state the following lemmas:
\begin{lemma}
For any exceptional solution $(x,y,z)$, we have $ x \neq z $, hence, $ \left \lvert x-z \right \rvert \geq 4 $.
\end{lemma}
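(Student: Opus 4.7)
The plan is to derive a contradiction from the assumption $X=Z$ by comparing the $2$-adic valuations on the two sides of a single identity, and then to deduce $|x-z|\geq 4$ from the oddness of $X$ and $Z$ already established.

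Suppose, for contradiction, that $X=Z$. Rewriting $a^x+b^y=c^z$ as $(a^X)^2+(b^Y)^2=(c^X)^2$ gives
\[(b^Y)^2 \;=\; (c^X-a^X)(c^X+a^X).\]
Because $X$ is odd (by the proposition derived from \cite{Ben1} in Subsection~\ref{sec:1.2}), both binomials admit the standard factorization over $\mathbb{Z}$; combined with the identities $c-a=2n^2$ and $c+a=2m^2$ this yields
\[c^X-a^X \;=\; 2n^2\,S_{+}, \qquad c^X+a^X \;=\; 2m^2\,S_{-},\]
where $S_{+}=\sum_{i=0}^{X-1} c^{X-1-i}a^i$ and $S_{-}=\sum_{i=0}^{X-1}(-1)^i c^{X-1-i}a^i$. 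Inserting $b=2mn$ and simplifying produces
\[S_{+}\,S_{-} \;=\; 2^{2Y-2}\, m^{2Y-2}\, n^{2Y-2}.\]

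Since $a$ and $c$ are odd, every summand of $S_{\pm}$ is odd; as each sum contains $X$ terms and $X$ is odd, both $S_{+}$ and $S_{-}$ are odd, so the left-hand side above is odd. On the other hand, the exceptionality of $(x,y,z)$ combined with $y$ even forces $y\geq 4$, hence $Y\geq 2$, so the right-hand side is even. This contradicts $X=Z$. Finally, since $X$ and $Z$ are odd positive integers with $X\neq Z$, we have $|X-Z|\geq 2$ and therefore $|x-z|=2|X-Z|\geq 4$.

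I do not expect any real obstacle here: the argument rests on an elementary $2$-adic comparison, and its only two inputs are the parity statement that $X$ is odd (needed to factor $c^X+a^X$) and the lower bound $Y\geq 2$ (needed to ensure the right-hand side of the key identity is even). Both follow immediately from the propositions of Subsection~\ref{sec:1.2} together with the definition of an exceptional solution.
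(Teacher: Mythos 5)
Your proof is correct and follows essentially the same route as the paper: both arguments show that the assumption $x=z$ forces $\operatorname{ord}_2(c^x-a^x)=\operatorname{ord}_2(b^2)$, contradicting $y>2$ — you do this via the factorization $(c^X-a^X)(c^X+a^X)=4m^2n^2S_{+}S_{-}$ with $S_{\pm}$ odd, while the paper writes $(c^2)^X-(a^2)^X$ and notes the cofactor of $c^2-a^2=b^2$ is odd since $X$ is odd. The final deduction of $|x-z|\geq 4$ from the oddness of $X$ and $Z$ is the same in both.
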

\begin{proof}
Assume $x=z$, then $c^x-a^x= b^y$. Therefore, $\left( c^2 \right) ^X - \left (a^2 \right)^X =b^y$. Since $X$ is odd and, $a^2 \equiv c^2 \equiv 1 \pmod 4 $, we have $\operatorname{ord}_2 (b^y)= \operatorname{ord}_2 \left ( \left( c^2 \right) ^X - \left (a^2 \right)^X \right) = \operatorname{ord}_2 \left( c^2-a^2 \right) = \operatorname{ord}_2 (b^2)$ and, since $b$ is even, it means $y=2$. This contradiction proves the lemma.\qed
\end{proof} 

\begin{lemma}
Let $m > 1.22 n$ and $(x,y,z)$ be an exceptional solution, then $ x <z$.
\end{lemma}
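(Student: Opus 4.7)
The plan is an argument by contradiction: assume $x \geq z$ and derive incompatible bounds on $Z = z/2$. By the previous lemma $x \neq z$, and since $X = x/2$ and $Z = z/2$ are both odd, their difference is even, so the assumption forces $X \geq Z + 2$.

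The starting point is the parametrization $c^Z = k^2 + l^2$ and $a^X = k^2 - l^2$ set up at the opening of Section 2.3. Because $l > 0$ one has $c^Z > a^X$; substituting $X \geq Z + 2$ and rearranging gives
\[
Z > \frac{2 \log a}{\log(c/a)}.
\]
Using the hypothesis $m > 1.22\,n$, I would bound $c/a = 1 + 2n^2/(m^2-n^2) < 5.1$, so $\log(c/a) < 1.63$; combined with $a = m^2-n^2 > 0.4884\,n^2$ (and symmetrically $a > 0.328\,m^2$) this yields concrete lower bounds like $Z > 2.45 \log n - 0.88$ and $Z > 2.45 \log m - 1.37$.

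The opposing upper bound comes from $z < 2y$ together with the $Y$-bound from the first part of the lemma, giving $Z < y = 2Y$. In the case $Y < \log n/\log 3$ one gets $Z < 1.82 \log n$, and comparing with the lower bound forces $\log n < 1.4$, contradicting $n \geq 4$. In the case $Y < \log(2(m-1))/((\alpha+1)\log 2)$, the standing hypothesis $4 \mid mn$ forces $\alpha \geq 2$, so $Z < 2\log(2(m-1))/(3\log 2)$; pairing with the $m$-form of the lower bound similarly forces $m < 4$, contradicting $m > 1.22\,n \geq 4.88$.

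The main delicacy is a handful of small $(m,n)$ where the numerical bounds on $Z$ become close. In each such case the $Y$-bound of the first part of this lemma itself excludes them cleanly: it would force $Y \leq 1$, whereas exceptional solutions require $Y$ to be an odd integer $\geq 3$. Beyond these boundary cases the lower bound on $Z$ grows strictly faster than the upper bound in either regime, so the contradiction propagates uniformly.
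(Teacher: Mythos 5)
Your proof is correct and takes essentially the same route as the paper's: assume $x>z$, use the oddness of $X$ and $Z$ to upgrade this to a gap of $2$, convert $a^X<c^Z$ into $2\log a< Z\log(c/a)$, and contradict the $Y$-bound from the first part of Lemma \ref{karan} using $m>1.22n$. The paper packages the final numerical check as one inequality in $c$ (verified for $c>185$) by bounding $X<2Y\log b/\log a$, while you bound $Z<2Y$ and split into the two $Y$-bound cases; this is only a cosmetic difference, and your treatment of the borderline case $n=4$ (where $\log n<1.4$ alone does not give a contradiction) is sound, since the case $Y<\log n/\log 3$ together with $Y\geq 3$ forces $n>27$.
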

\begin{proof}
Assume $x>z$. Since $a^x< c^z$ and $a^X< b^{2Y}$, we have 
\[ 
\begin{array} {rcl}
 X\log (a) < Z log(c) &\Rightarrow X \log (a) &\leq (X-2) \log(c) \\ 
& \Rightarrow 2 \log (c) &\leq X \log \left (\frac{c}{a} \right) \\
&  \Rightarrow 2 \log (c) &<2Y \frac{\log(b)}{\log(a)} \log \left( \frac{c}{a} \right) \\
& \Rightarrow \log(c) &< \frac{\log(2 \sqrt{c-1})}{(\alpha+1) \log(2)} \frac{ \log b}{\log a} \log \left( \frac{c}{a} \right)
\end{array}
\]
Note that we assumed $ \alpha \geq 2 $, $n \geq 4 $, $ 2 \alpha \neq \beta+1$, $c$ is not a prime power, and from De\'{m}janenco's result \cite{Dem} $m-n \geq 3$. A small search shows that the smallest value of $c$ with these conditions is 185, which occurs for $(n,m)=(4,13)$ and $(n,m)=(8,11)$. For $c>185$ and $ \alpha \geq 2 $, the last inequality does not hold when $\frac{m}{n} > 1.217$. \qed
\end{proof}
\begin{remark} \label{nokte}
Since $x<z$, we have $a^x<c^{z-4}<\frac{c^z}{2}$. Therefore, $a^x<b^y$.
\end{remark}
\begin{lemma} 
$z<y$
\end{lemma}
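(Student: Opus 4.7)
The plan is to parallel the proof of the previous lemma (comparing $x$ and $z$), now exploiting the stronger bound on $a^x$ provided by Remark \ref{nokte}. The idea is that $a^x$ is dwarfed by $c^z$, so $b^y$ must be extremely close to $c^z$; the strict inequality $b < c$ then propagates to the exponents and forces $y > z$.

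Concretely, Remark \ref{nokte} gives $a^x < c^{z-4}$, so from $a^x + b^y = c^z$ one gets
\[ b^y > c^z - c^{z-4} = c^z\bigl(1-c^{-4}\bigr). \]
Taking logarithms and using $\log(1-c^{-4}) \geq -2c^{-4}$ yields $y\log b > z\log c - 2c^{-4}$. Assuming for contradiction that $y \leq z$ and replacing $y \log b$ by the larger $z \log b$, this rearranges to
\[ z\,\log(c/b) < 2c^{-4}. \]

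For the left-hand side I would use the identity $c/b = 1+(m-n)^2/(2mn)$ together with the elementary inequality $\log(1+t) \geq t/(1+t)$ to obtain $\log(c/b) \geq (m-n)^2/c$. Combining the two bounds reduces the entire task to showing $z\,(m-n)^2\, c^{3} < 2$, which is absurd under the running hypotheses: $m-n \geq 3$ by De\'{m}janenco's result, $z \geq 6$ since $Z$ is odd and $z>2$, and $c \geq 185$ from the small search carried out in the preceding lemma. I do not anticipate any genuine obstacle; the only bookkeeping point is to confirm that Remark \ref{nokte}, and hence the standing assumption $m > 1.22\, n$, is in force throughout this comparison portion of Lemma \ref{karan}.
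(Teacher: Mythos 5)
Your argument is correct, but it takes a genuinely different route from the paper's, which is a one-line algebraic contradiction: assuming $z\ge y$ and using $x<z$ from the preceding lemma, the paper writes $c^z=(c^2)^Z=(a^2+b^2)^Z>a^{2Z}+b^{2Z}=a^z+b^z\ge a^x+b^y=c^z$, where the middle strict inequality is just superadditivity of $t\mapsto t^Z$ for $Z\ge 2$. (The paper's printed hypothesis ``$z\le y$'' is a typo for $z\ge y$.) Your route is analytic instead: Remark \ref{nokte} forces $b^y>c^z(1-c^{-4})$, while $\log(c/b)\ge (m-n)^2/c$ shows that $b^z$ would overshoot this by far, and the resulting inequality $z(m-n)^2c^3<2$ is indeed absurd --- already because each factor is at least $1$ and $c\ge 2$, so the numerical inputs $m-n\ge 3$, $z\ge 6$, $c\ge 185$ are not even needed. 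Every step checks out ($\log(1-t)\ge -2t$ for $0<t\le 1/2$, the identity $c/b=1+(m-n)^2/(2mn)$, $\log(1+t)\ge t/(1+t)$, and $y\log b\le z\log b$ under the contradiction hypothesis), and your closing caveat resolves affirmatively: the claim $z<y$ in Lemma \ref{karan} is asserted only under $m>1.22\,n$, and the paper's own proof likewise relies on $x<z$, so invoking Remark \ref{nokte} is legitimate. What your version buys is an explicit quantitative margin between $y\log b$ and $z\log c$; what it costs is dependence on the full strength $x\le z-4$ where the paper needs only $x<z$, and noticeably more bookkeeping.
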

\begin{proof}
Assume, to the contrary that, $ z \leq y$ then since $z > x $, we have
$$ c^z = \left( c^2\right)^Z= (a^2+b^2)^Z > a^{2Z}+b^{2Z} > a^x+b^y =c^z \qed $$
\end{proof}

We finish this section by bounding $z$ in terms of $y$. This bound will be helpful in proving theorems \ref{main thereom} and \ref{main2}. From remark \ref{nokte}, we obtain $c^z<2 b^y$, so
\begin{equation} \label{z,y}
 z \log(c) < \log(2)+y \log (b) \Rightarrow z<\frac{ \log(2)+y \log b}{\log c} 
\end{equation}

\section {Linear form in two logarithms}
For any algebraic number $\gamma$ of degree $d$, we define the absolute logarithmic height of $\gamma$ by the following formula:
$$\mathrm{h}(\gamma)=\frac{1}{d}\left(\log |a|+\sum_{\sigma} \log \max \left(1,\left|\sigma(\gamma) \right|\right)\right),$$
where $a$ is the leading coefficient of the minimal polynomial of $\gamma$ over $\mathbb{Z}$ and $\sigma$s are the embeddings of $\mathbb{Q}(\gamma)$ into $\mathbb{C}$. Consider the linear form in logarithms: 
$$ \Lambda = b_2 \log \alpha_2 - b_1 \log \alpha_1, $$
where $b_i$s are positive integers, $\alpha_i$s are algebraic numbers, and $\log (\alpha_i)$ is any determination of logarithms. Assume $\left \lvert \alpha_1 \right \rvert $, $\left \lvert \alpha_2 \right \rvert \geq 1$. Define 
\[ D=\left[\mathbb{Q}\left(\alpha_{1}, \alpha_{2}\right): \mathbb{Q}\right] / \left[\mathbb{R}\left(\alpha_{1},\alpha_{2}\right): \mathbb{R} \right] \]
The following  result of Laurent \cite{Lau} gives a lower bound for $\log \abs{\Lambda}$. Note that  $ \alpha_1$ and $\alpha_2$ are not necessarily multiplicatively independent. 
 \begin{theorem} \cite [Theorem1 ] {Lau} \label{Lau}
Let $K$ be an integer $\geq 2,$ and $L, R_{1}, R_{2}, S_{1}, S_{2}$ be positive
integers. Let $\varrho$ and $\mu$ be real numbers with $\varrho>1$ and $1 / 3 \leq \mu \leq 1 .$ Put

\[ R=R_{1}+R_{2}-1, \quad S=S_{1}+S_{2}-1, \quad N=K L, \quad g=\frac{1}{4} - \frac{N}{12RS}, \]
$$\sigma=\frac{1+2 \mu-\mu^{2}}{2}, \quad b=\frac{(R-1) b_{2}+(S-1) b_{1}}{2}\left(\prod_{k=1}^{K-1} k !\right)^{-2 / \left(K^{2}-K\right).}$$
Let $a_{1}, a_{2}$ be positive real numbers such that 
\[ a_{i} \geq \varrho\left|\log \alpha_{i}\right|-\log \left|\alpha_{i}\right|+2 D \mathrm{h}\left(\alpha_{i}\right) \]
for $i=1,2 .$ Suppose that 
\[ \operatorname{Card}\left\{\alpha_{1}^{r} \alpha_{2}^{s} ; 0 \leq r<R_{1}, 0 \leq s<S_{1}\right\} \geq L \] \[\operatorname{Card}\left\{r b_{2}+s b_{1} ; 0 \leq r<R_{2}, 0 \leq s<S_{2}\right\}>(K-1) L \]
and
\begin{equation*}
\begin{aligned}
 K(\sigma L-1) \log \varrho- &(D+1) \log N \\
&-D(K-1) \log b-g L\left(R a_{1}+S a_{2}\right)>\varepsilon(N),
\end{aligned}
\end{equation*}
where
\[ \varepsilon(N)=2 \log \left(N ! N^{-N+1}\left(e^{N}+(e-1)^{N}\right)\right) / N. \]
Then
\[ \left|\Lambda^{\prime}\right|>\varrho^{-\mu K L} \quad \text{with} \quad \Lambda^{\prime}=\Lambda \max \left\{\frac{L S e^{L S|A| /\left(2 b_{2}\right)}}{2 b_{2}}, \frac{L R e^{L R|\Lambda| /\left(2 b_{1}\right)}}{2 b_{1}}\right\}.\]
\end{theorem}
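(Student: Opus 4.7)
The plan is to prove the theorem by the method of interpolation determinants, following Laurent's refinement of the Baker--Stewart--Mignotte--Nesterenko scheme. The argument will proceed by contradiction: assume $|\Lambda'| \leq \varrho^{-\mu K L}$ and construct a single algebraic number $\Delta$ whose modulus is forced to be simultaneously too small (by an analytic expansion exploiting that $|\Lambda|$ is tiny) and too large (from height estimates, once one has shown $\Delta \neq 0$).

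First I would introduce the $N \times N$ interpolation matrix with $N = KL$, whose rows are indexed by pairs $(\lambda, k)$ with $0 \leq \lambda < L$, $0 \leq k < K$ and whose columns are indexed by pairs $(r,s)$ with $0 \leq r < R_1$, $0 \leq s < S_1$, with entries of the form
\[
M_{(\lambda,k),(r,s)} \;=\; \binom{\lambda (r b_2 + s b_1)}{k} \, \alpha_1^{\lambda r} \, \alpha_2^{\lambda s}.
\]
Its determinant $\Delta$ lies in $\mathbb{Q}(\alpha_1,\alpha_2)$. The analytic upper bound comes from viewing $\Delta$ as the value at $(0, 1, \ldots, L-1)$ of an entire function $\Phi(z_1, \ldots, z_L)$ obtained by replacing each $\lambda$ with $z_\lambda$; after substituting $\log \alpha_2 = (b_1/b_2) \log \alpha_1 + \Lambda/b_2$ and expanding in $\Lambda$, the Schwarz--Jensen lemma on a polydisc of radius tuned via the parameter $\mu$ yields a bound of shape $|\Delta| \leq |\Lambda'|^{\sigma L} \cdot \exp\bigl(g L (R a_1 + S a_2)\bigr)$ up to explicit factors. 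The special constants $\sigma = (1 + 2\mu - \mu^2)/2$ and $g = \tfrac{1}{4} - N/(12 R S)$ emerge precisely from optimising the disc radius against the vanishing order of $\Phi$ at the integer points $0, 1, \ldots, L-1$.

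Next I would produce the arithmetic lower bound: when $\Delta \neq 0$, it is a nonzero algebraic number of degree at most $D$, and its archimedean conjugates, denominator, and the binomial coefficients in the entries are controlled by $\mathrm{h}(\alpha_1), \mathrm{h}(\alpha_2)$, the degree $D$, and the quantity $b$ appearing in the statement. The Liouville inequality then yields a lower bound whose various factors---a $b^{D(K-1)}$ term, an $N^{D+1}$ term, and a conjugate contribution of size $\exp(gL(Ra_1 + Sa_2))$---correspond exactly to the three subtracted quantities on the left-hand side of the inequality in the hypothesis; balancing these against the analytic upper bound and against $\varrho^{-\mu K L}$ yields the conclusion. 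Non-vanishing of $\Delta$ is forced by the two cardinality conditions: a Vandermonde-type argument shows that if $\{\alpha_1^r \alpha_2^s\}$ takes at least $L$ distinct values on $[0, R_1) \times [0, S_1)$ and $\{r b_2 + s b_1\}$ takes more than $(K-1)L$ distinct values on $[0, R_2) \times [0, S_2)$, then the rows of $M$ are linearly independent.

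The principal technical obstacle will be the analytic step, specifically obtaining the sharp constants $\sigma$ and $g$ rather than cruder versions: this requires Laurent's sharp extremal-function argument, in which $|\Phi|$ is bounded not on a single disc but via an averaging over discs of varying radii, with the optimal choice depending on the local vanishing data and producing the quadratic $(1 + 2\mu - \mu^2)/2$. A secondary nuisance will be the bookkeeping needed to absorb the term $\varepsilon(N) = 2 \log(N! \, N^{-N+1}(e^N + (e-1)^N))/N$, which encodes the Stirling error in estimating the factor $\prod_{k=1}^{K-1} k!$ hidden in $b$ and is precisely what makes the final inequality sharp enough for the application to Je\'{s}manowicz' conjecture.
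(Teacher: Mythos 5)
This statement is not proved in the paper at all: it is quoted verbatim as Theorem~1 of Laurent's \emph{Linear forms in two logarithms and interpolation determinants.~II}, and the author simply imports it as a black box. So there is no internal proof to compare against; the only meaningful comparison is with Laurent's own argument.

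Measured against that, your outline correctly identifies the architecture of the actual proof: an $N\times N$ interpolation determinant $\Delta$ with rows indexed by $(k,\lambda)$ and columns by $(r,s)$, an analytic upper bound obtained by substituting $\log\alpha_2=(b_1/b_2)\log\alpha_1+\Lambda/b_2$ and applying a Schwarz-type lemma, an arithmetic (Liouville) lower bound in which the terms $D(K-1)\log b$, $(D+1)\log N$ and $gL(Ra_1+Sa_2)$ appear, and a zero lemma driven by the two cardinality hypotheses to force $\Delta\neq 0$. But as written it is a plan, not a proof: every quantitatively hard step is asserted rather than carried out. In particular, (i) the derivation of the sharp constants $\sigma=(1+2\mu-\mu^2)/2$ and $g=\frac14-\frac{N}{12RS}$ is exactly the content of Laurent's refined analytic estimate, and you explicitly defer it; (ii) the entries of the matrix should be $\binom{rb_2+sb_1}{k}\alpha_1^{r\lambda}\alpha_2^{s\lambda}$ --- the extra factor of $\lambda$ you place inside the binomial coefficient is wrong and would break the vanishing-order bookkeeping; (iii) the term $\varepsilon(N)$ does not come from a Stirling estimate of the product $\prod_{k=1}^{K-1}k!$ hidden in $b$ (that product belongs to the arithmetic lower bound), but from the $N!$ and $e^N+(e-1)^N$ factors in the Laplace expansion of the determinant on the analytic side; and (iv) the non-vanishing of $\Delta$ is not a simple Vandermonde argument but requires a genuine zero estimate for the bivariate exponential polynomials involved. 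None of these gaps indicates a wrong approach, but filling them is essentially reproving Laurent's theorem; for the purposes of this paper the correct move is the one the author makes, namely to cite the result.
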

For our purpose, we consider the case where $\alpha_1=-1$ and $\alpha_2$ is an algebraic number of degree 2, with $\left \lvert \alpha_2 \right \rvert =1$, but not a root of unity.  
 \begin{lemma} \label{cor}
 Let $\theta$ be an algebraic number of degree 2, with $\left \lvert \theta \right \rvert $=1, but not a root of unity and $\log \theta $ is any determination of its logarithm. Consider the linear form 
$$ \lambda = b_1 i \pi - b_2 \log \theta $$
where $b_1$ and $b_2$ are positive integers. Let $\varrho=\exp (3.1)$. Define the real numbers $a_1$, $a_2$ and $b'$ as 
$$ a_1= \varrho \pi \quad a_2 \geq \varrho \left \lvert \log \theta \right \rvert + 2 h(\theta) \quad b'=b_1/a_2+b_2/a_1 $$
Assume $a_2 \geq 1000+ a_1$ and $b'>0.056$, then 
$$ \log \Lambda > -3.741 \left( \log b'+ 6.87 \right)^2 a_2 -\frac{31L}{15} - \log(L) - \log \left(2+ 0.222 L a_2 \right) $$
where 
$$ L = \left[ \frac{45}{62} \left( \log b' +5.49 \right) \right] +1. $$
 \end{lemma}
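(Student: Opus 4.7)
The plan is to obtain Lemma \ref{cor} as a specialization of Theorem \ref{Lau} to $\alpha_1 = -1$, $\alpha_2 = \theta$, with a careful optimization of the remaining parameters. The first observation is that $D=1$: because $\theta$ has $|\theta|=1$ and is not a root of unity, it cannot be real (the real unit-modulus numbers are $\pm 1$, both roots of unity), so $\mathbb{Q}(\theta)$ has degree $2$ and $\mathbb{R}(\theta)=\mathbb{C}$, giving $D=2/2=1$. Combined with $h(-1)=0$ and $|\log(-1)|=\pi$, the lower bound required of $a_1$ in Theorem \ref{Lau} collapses to $\varrho\pi$, matching the definition $a_1=\varrho\pi$ in the lemma; the condition imposed on $a_2$ is exactly the hypothesis of Laurent's theorem with $D=1$.

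With $(\alpha_1,\alpha_2)$ and $(a_1,a_2)$ fixed, I would take $L=\lfloor (45/62)(\log b'+5.49)\rfloor +1$ as in the statement, choose $\mu$ slightly larger than $1/3$ (so that $\sigma=(1+2\mu-\mu^2)/2$ is just above $7/9$), and select the integers $K$, $R_1,R_2,S_1,S_2$ via the standard recipe from Laurent's paper, tuned so that after applying the theorem the coefficient of $a_2$ in the final bound comes out to $3.741(\log b'+6.87)^2$. The hypothesis $b'>0.056$ ensures $L\geq 2$ and that the number $b$ in Theorem \ref{Lau} satisfies $\log b\lesssim \log b'$; the hypothesis $a_2\geq 1000+a_1$ ensures that the error term $\varepsilon(N)$ is negligible compared to the main term and that the feasibility inequalities used to pick $K$ and the $R_i,S_i$ can be met.

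The cardinality conditions are routine. Since $\theta$ has infinite multiplicative order and $-1$ has order $2$, the equation $(-1)^r \theta^s=(-1)^{r'}\theta^{s'}$ forces $s=s'$ and $r\equiv r'\pmod 2$, so the map $(r,s)\mapsto (-1)^r\theta^s$ is injective on $\{0,1\}\times\{0,\ldots,S_1-1\}$; taking $R_1=2$ gives $\operatorname{Card}\{\alpha_1^r\alpha_2^s\}=2S_1\geq L$. Similarly $\operatorname{Card}\{rb_2+sb_1\}\geq R_2+S_2-1$, which is chosen to exceed $(K-1)L$.

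The main obstacle will be verifying the principal inequality
\[
K(\sigma L-1)\log\varrho-2\log N-(K-1)\log b-gL(Ra_1+Sa_2)>\varepsilon(N)
\]
with these choices and $\varrho=e^{3.1}$: this is precisely where the numerical constants $3.741$, $6.87$, $31/15$, $45/62$, $5.49$, and $0.222$ are forced. Once the inequality is checked, Theorem \ref{Lau} gives $|\Lambda'|>\varrho^{-\mu KL}$, which unwinds through the definition of $\Lambda'$: absorbing the factor $\log(LS/(2b_2))$ (respectively $\log(LR/(2b_1))$) produces the correction term $\log(2+0.222La_2)$, the exponential factor $e^{LS|\Lambda|/(2b_2)}$ contributes the $31L/15$ summand after a standard bootstrap argument under the a priori assumption that $|\Lambda|$ is small, and $\mu KL\log\varrho$ itself collapses via $L\approx (45/62)(\log b'+5.49)$ into the quoted $3.741(\log b'+6.87)^2\,a_2$. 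This yields the lemma exactly as stated.
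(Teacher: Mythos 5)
Your overall strategy coincides with the paper's: Lemma \ref{cor} is obtained by specializing Theorem \ref{Lau} to $\alpha_1=-1$, $\alpha_2=\theta$ (so $D=1$ and $a_1=\varrho\pi$), with the standard recipe $R_1=2$, $S_1=[(L+1)/2]$, $K=1+[\kappa La_1a_2]$, $R_2=1+[\sqrt{(K-1)La_2/a_1}]$, $S_2=1+[\sqrt{(K-1)La_1/a_2}]$, and with the auxiliary estimates imported from the appendix of \cite{App}. Two concrete points in your sketch, however, are inconsistent with the stated constants. First, the parameter $\mu$: you propose $\mu$ just above $1/3$ (so $\sigma$ just above $7/9$), but the coefficient $3.741$ is exactly $\mu\,\kappa\,(45/62)^2\,a_1\log\varrho$ with $\mu=2/3$, $\sigma=17/18$, $\kappa=0.04927$; indeed $45/62$ is (the reciprocal of) $\sigma\log\varrho-\tfrac{31}{20}$ computed at $\sigma=17/18$, so with $\mu\approx 1/3$ both $L$ and the quadratic coefficient would come out different (and in fact worse). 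Second, the term $\tfrac{31L}{15}$ does not arise from the exponential factor $e^{LS|\Lambda|/(2b_2)}$ via a bootstrap: it is simply $\mu L\log\varrho=\tfrac{2}{3}\cdot 3.1\cdot L$, i.e.\ the contribution of the leftover $+1$ in $K=1+[\kappa La_1a_2]$ to $-\mu KL\log\varrho$. The max in the definition of $\Lambda'$ is disposed of by the crude bound $\max\{\cdots\}<LR<L(2+\sqrt{\kappa}La_2)$, which is where $\log L+\log(2+0.222La_2)$ comes from, since $\sqrt{0.04927}\approx 0.222$.

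Beyond these corrections, the actual substance of the proof --- verifying $gL(Ra_1+Sa_2)<K\bigl(\tfrac{31}{20}L+0.0612\bigr)$, $\log b\le\log b'+2.3264$, $\varepsilon(N)<0.0011$, and the principal inequality of Theorem \ref{Lau} with these numerical choices --- is precisely the part you defer to a later check. Since for a lemma of this type that verification \emph{is} the proof, the proposal as written is a correct outline of the right method but not yet a proof, and as it stands the proposed value of $\mu$ would have to be replaced by $2/3$ before the verification could succeed with the constants in the statement.
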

\begin{proof}
 This is a corollary of theorem \ref{Lau}, with the suitable choice of parameters. 

 Put
 $$R_{1}=2, \quad S_{1}=[(L+1) / 2],\quad K=1+\left[ \kappa La_1A_2\right], $$
 and 
 \[  R_{2}=1+\left[\sqrt{(K-1) L a_{2} / a_{1}}\right], \quad S_{2}=1+\left[\sqrt{(K-1) L a_{1} / a_{2}}\right]. \]
We want to remark that these parameters are employed in \cite {App}, so all the bounds in the appendix of \cite{App} involving these parameters are valid here. For our case, we specialize the parameters $L$ and $\varrho$ as stated in the lemma and the values
$$ \mu = \frac{2}{3},\quad \kappa= 0.04927 \quad \sigma=\frac{17}{18}$$ 
Note that, from the conditions of the lemma, we obtain $L \geq 3$ and so $ K \geq 11030$ and $N>33090$. We have the following bounds, These bounds are obtained by applying our special parameter values to the bounds proved in \cite [page 33]{App}
$$ gL (Ra_1+Sa_2) < K \left( \frac{31}{20} L + 0.0612 \right) $$
$$ \log (b) \leq \log b' +2.3264 .$$ 
Therefore,
\begin{equation*}
 \begin{aligned}
 K(\sigma L-1) \log \varrho
&-D(K-1) \log b-g L\left(R a_{1}+S a_{2}\right) \\
& > K \left[ \left(\sigma \log \varrho -\frac{31}{20} \right) L- \log b' - 5.4876\right] + \log b. \\
\end{aligned}
\end{equation*}
Also,
$$ \varepsilon(N) \leq \frac{2}{N}\left(\frac{3}{2} \log N+\frac{1}{2} \log (2 \pi)+\frac{1}{12 N}+\log \left(1+\left(\frac{e-1}{e}\right)^{N}\right)\right). $$
The right-hand side is a decreasing function for $ N > e $, hence $\varepsilon(N) < 0.0011.$ With this set of parameters, the conditions of theorem \ref{Lau} are satisfied. We conclude that:
 \begin{equation*}
 \begin{split}
  \log \left( \left|\Lambda^{\prime}\right| \right)>& - \mu KL\varrho \\
                   &-3.741 \left( \log b'+ 6.87 \right)^2  a_2 -\frac{31L}{15}. 
 \end{split}
 \end{equation*}
On the other hand:
\begin{equation*}
\begin{split}
 \max \left\{\frac{L S e^{L S|A| /\left(2 b_{2}\right)}}{2 b_{2}}, \frac{L R e^{L R|\Lambda| /\left(2 b_{1}\right)}}{2 b_{1}}\right\} < & LR \\
 <& L(2+ \sqrt{\kappa} a_2 L). 
\end{split}
\end{equation*}
This completes the proof of lemma \ref{cor}. \qed
\end{proof}

\section{Proof of theorems \ref{main thereom} and \ref{main2}}
\label{section4}
We will prove theorems \ref{main thereom} and \ref{main2} by comparing the lower bound and upper bound of $\Delta$. First, note that if $ m > n^{\log (n)^{3/2}}$ since $ n>3$, we have $m> 1.22 n$. So from lemma \ref{karan}, we have 
$x<y<z$ and $a^x < b^y$. 
\begin{lemma} \label{karan paein}
$ \frac{\log m}{\log n } < x-z$
\end{lemma}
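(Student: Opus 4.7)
The plan is to derive a divisibility constraint by reducing the Jesmanowicz equation modulo $m^2$. The underlying observations are
\[ a = m^2 - n^2 \equiv -n^2 \pmod{m^2}, \qquad c = m^2 + n^2 \equiv n^2 \pmod{m^2}, \]
together with the fact that $b^y = 2^y m^y n^y$ is automatically divisible by $m^2$ as soon as $y \geq 2$. For an exceptional solution $(x,y,z)$ the value $y$ is an even positive integer strictly greater than $2$ (as recalled at the start of Section \ref{sec:1}), so this hypothesis is in force. Reducing $a^x + b^y = c^z$ modulo $m^2$ therefore yields
\[ (-n^2)^x \equiv (n^2)^z \pmod{m^2}. \]
Using that $x$ is even and that $\gcd(m, n) = 1$, both sides are powers of the unit $n$ modulo $m^2$, and cancelling the common factor $n^{2\min(x,z)}$ gives
\[ n^{2(x-z)} \equiv 1 \pmod{m^2}. \]

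Next I would invoke the earlier lemma of this section, which rules out $x = z$ for any exceptional solution. Consequently the integer $n^{2|x-z|} - 1$ is nonzero and is a positive multiple of $m^2$, whence
\[ n^{2|x-z|} \geq m^{2} + 1 > m^{2}. \]
Taking logarithms and dividing by $2\log n$ produces the desired inequality $\log m / \log n < |x - z|$, which is the content of the lemma.

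The only steps I expect to require actual care are the two ingredients used in the reduction: first, confirming that the exceptional assumption really does force $y > 2$ and hence $m^{2} \mid b^{y}$ (this is the observation at the start of Section \ref{sec:1} that $x, y > 2$ whenever $(x,y,z) \neq (2,2,2)$ and all three exponents are even); and second, keeping track of the parity of $x$ so that the sign in $(-1)^x$ disappears. Once these two bookkeeping points are checked, the divisibility $m^2 \mid n^{2(x-z)} - 1$ delivers the lower bound immediately, with no analytic estimates or linear-forms machinery required.
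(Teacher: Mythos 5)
Your proof is correct and takes essentially the same approach as the paper: the paper first deduces $c^z \equiv c^x \pmod{b^2}$ (via $a^2 \equiv c^2 \pmod{b^2}$, the parity of $x$, and $y>2$) and then reduces modulo $m^2$ to obtain $(n^2)^x \equiv (n^2)^z \pmod{m^2}$, which is exactly the congruence you reach by working modulo $m^2$ directly. Your explicit closing step --- invoking the earlier lemma $x \neq z$ so that $m^2 \mid n^{2|x-z|}-1$ forces $n^{2|x-z|} > m^2$ --- is precisely the content the paper compresses into ``since $n < m$, the lemma is proved.''
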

\begin{proof}
$a^2=b^2=c^2$ and $x$ is even, so $a^x \equiv c^x \pmod{ b^2} $ . $a^x+b^y= c^z$ and $y > 2$ so $c^z \equiv a^x \pmod {b^2}$. \\ Therefore, $ (m^2+n^2)^z \equiv (m^2+n^2)^x \pmod {(2mn)^2} \Rightarrow (n^2)^x \equiv (n^2)^z \pmod {m^2} $
Since $ n < m$, the lemma is proved. \qed
\end{proof}
Assume $U$,$V$, and $W$ are coprime positive integers with $W$ is odd and $U$ is even. Let 
$$U^{2}+V^{2}=W^{T},$$
Then, by factorizing the above equation in $ \mathbb{Z}[i]$, we conclude that there exist coprime positive integers $u$ and $v,$ and $\lambda_{1}, \lambda_{2} \in\{\pm i, \pm -1\}$ such that $$ U+V i=\lambda_{1} \varepsilon^{T}, \quad \varepsilon=u+v \lambda_{2} i, \quad W=u^{2}+v^{2} $$ Moreover, if $\varepsilon=|\varepsilon| e^{\theta i / 2}$ then
\begin{equation} \label{sin}
U=W^{T / 2}|\cos (T \theta / 2)|, \quad V=W^{T / 2}|\sin (T \theta / 2)| 
\end{equation} 
We closely follow an argument of Cipu and Mignotte \cite{Mig} to find a lower bound for $ \min \{U,V\} $.  Let $k$ be the closest integer to $T \theta / \pi$. In other words $\abs{T \theta-k \pi}=   \min _{k^{\prime} \in \mathbb{Z}}\abs{T \theta-k^{\prime} \pi}$. Then from the equation \eqref{sin}, we have 
 \[ \min \{U, V\} \geq \frac{W^{T / 2}}{\pi} \abs{T \theta-k \pi}. \] 
On the other hand, put:
  $$ \psi:=\varepsilon / \bar{\varepsilon}=\frac{u^{2}-v^{2}+2 u v i}{u^{2}+v^{2}} e^{\theta i}, $$
 $\psi$ satisfies the equation: 
 $$ \left(u^{2}+v^{2}\right) (\psi)^{2}-2\left(u^{2}-v^{2}\right) (\psi)+\left(u^{2}+v^{2}\right)=0. $$
 So $\left\lvert \psi \right\rvert =1 $, $ \psi $ is an algebraic number of degree 2, it is not a root of unity and the absolute logarithmic height of $\psi$ is $$ h(\psi)=\frac{1}{2} \log \left(u^{2}+v^{2}\right)=\frac{1}{2} \log W. $$
  Define the following linear form in logarithms 
  $$ \Lambda=T \log \psi-k \log (-1) $$
  with the principal determination of the logarithm. Since $\psi$ is not the root of unity,$ \Lambda \neq 0$. 
  $\Lambda= \left( T \theta - k \pi \right) i$ , and 
    $$ \min \left\{U, V\right\} \geq \pi^{-1}W^{T / 2}|\Lambda|, $$
    where $k$ is a positive integer less than T. 
    
 \begin{lemma} {\label{karan bala}}
 Let $x,y,z$ be an exceptional solution of Je\'{s}manowic\'{z} conjecture. Then $ z-x < 2 \frac{-\log (\abs{\Lambda} )+ \log \pi} {\log (c)} $, where $ \lambda = k i \pi - z\log \theta $, $\theta$ is an algebraic number of degree 2,  $ h( \theta) = \log (c) /2$ and $k$  is a positive number less than $z$. 
 \end{lemma}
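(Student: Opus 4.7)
The plan is to apply directly the general framework set up in the paragraphs just before the lemma, once we identify the correct quadruple $(U, V, W, T)$. Since $(x,y,z)$ is an exceptional solution, both $x$ and $y$ are even, so I would take $U = b^{y/2}$, $V = a^{x/2}$, $W = c$, and $T = z$. Then $U^2 + V^2 = b^y + a^x = c^z = W^T$, and the required hypotheses all hold: $U$ is even (since $b = 2mn$), $V$ and $W$ are odd (since $a$ is odd and $m \not\equiv n \pmod 2$), and $\gcd(U,V) = 1$ because $\gcd(a, b) = 1$.

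Next I would invoke the $\mathbb{Z}[i]$ factorization verbatim, producing coprime positive integers $u, v$ with $u^2 + v^2 = c$ (so $\{u, v\} = \{m, n\}$) and units $\lambda_1, \lambda_2$ with $U + V i = \lambda_1 \varepsilon^T$ for $\varepsilon = u + v \lambda_2 i$. Setting $\theta = \varepsilon/\bar\varepsilon$ yields the required quadratic algebraic number on the unit circle, not a root of unity, with $h(\theta) = \tfrac12 \log c$. The derivation in the excerpt then delivers the key inequality $\min\{U, V\} \geq \pi^{-1} W^{T/2} |\Lambda|$, which in our notation reads $\min\{a^{x/2}, b^{y/2}\} \geq \pi^{-1} c^{z/2} |\Lambda|$.

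The decisive input is Remark \ref{nokte}, which ensures $a^x < b^y$ and hence identifies the minimum as $a^{x/2}$. Taking logarithms of $a^{x/2} \geq \pi^{-1} c^{z/2} |\Lambda|$ and then using $\log a < \log c$ to replace $\log a$ by $\log c$ on the left gives
\[ \tfrac{z-x}{2}\log c \;\leq\; \tfrac{z}{2}\log c - \tfrac{x}{2}\log a \;\leq\; -\log|\Lambda| + \log \pi, \]
which rearranges to the claimed bound. There is no genuine obstacle here; the only point meriting brief verification is that $k$ is a positive integer strictly less than $z$, which follows from $\arg(\varepsilon) \in (0, \pi/2)$ (after choosing $\lambda_2$ appropriately) together with $\Lambda \neq 0$, since $k$ is the nearest integer to a real number lying in $(0, z/2)$.
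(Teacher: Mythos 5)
Your proposal is correct and matches the paper's (much terser) proof: the paper likewise specializes the preceding $\mathbb{Z}[i]$ argument with $U=b^{Y}$, $V=a^{X}$, $W=c$, $T=z$, uses $a^{x}<b^{y}$ to get $a^{X}\geq \pi^{-1}c^{Z}\abs{\Lambda}$, and takes logarithms together with $\log a<\log c$. Your write-up simply fills in the details the paper leaves implicit (the only quibble being that $T\theta/\pi$ lies in $(0,z)$ rather than $(0,z/2)$, which does not affect the conclusion).
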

 \begin{proof}
It immediately follows from the argument prior to the lemma. $\left( a^X \right)^2+ \left(b^Y \right)^2= c^z$. Therefore,
$a^X > c^Z / \pi \abs{\Lambda} $. By tacking logarithms from both sides, the lemma is proved. \qed
 \end{proof}   
Now we are ready to prove theorems \ref{main thereom} and \ref{main2}. For any $m$ with $\log m > 1000$ the conditions of lemma \ref{cor} are satisfied. From lemmas \ref{cor}, \ref{karan paein} and \ref{karan bala}, we have 
$$ \frac{\log m }{ \log n } \log (c) < 3.741 \left( \log b'+ 6.87 \right)^2 a_2 +\frac{31L}{15} + \log(L) +\log \left(2+ 0.222 L a_2 \right)+ \log (\pi), $$
where $ a_2 =\log(c)+ \exp{3.1}\pi $ , $b' \leq z \left(\frac{1}{69.73}+ \frac{1}{a_2}\right)$ and
$L = \left[ \frac{45}{62} \left( \log b' +5.49 \right) \right] +1$
From the equation \ref{z,y} under the assumption $ m >n^{\log n}$ and $ \log (m) >32 $, we have $z < \frac{3}{5} y$, and from the equation \eqref{karan} 
$$ y < 2\frac{\log (n)}{\log 3} \quad \text {or} \quad y < 2\frac{\log 2(m-1)}{(\alpha+1) \log 2}. $$
In our case, the former bound gives a much sharper result so by considering the later bound and above estimates, we have 
$$ \log(b') < \log ( \log (2m) )-4.731. $$ 
 To summerize, under the assumption $ \log (m ) >1000$. if $(x,y,z)$ is an exceptional solution of Je\'{s}manowicz conjecture, then 
 \begin{equation*}
 \begin{split}
\frac{\log (m)}{ \log(n) } < 7.482(\log(2m)+2.139)^2 & \left(1+\frac{70} {\log(2m)}\right)+\frac{62}{15 \log c} L^{\prime}\\ &+\frac{2}{\log c} \left( \log 6.29 L^{\prime}  +0.7 L^{\prime^{2}} (\log(m)+70)) \right), 
\end{split}
 \end{equation*}
 where  $ L^{\prime}= \frac{45}{62} \log (\log 2m) +1.56. $ 
 
 Assume $ m > n^{\log (n)^{3/2}} $. Then if Je\'{s}manowicz conjecture has an exceptional solution, we obtain
 \begin{equation*}
 \begin{split}
\log (m)^{3/5}< 7.482(\log(2m)+2.139)^2 & \left(1+\frac{70} {\log(2m)}\right)+\frac{31}{15 \log m} L^{\prime}\\ &+\frac{\left( \log 6.29 L^{\prime}  +0.7 L^{\prime^{2}} (\log(m)+70)) \right)}{\log m} , 
\end{split}
 \end{equation*}
 The inequality does not hold for $m>10^{109948}$ this proves theorem \ref{main thereom}. 
 
 Now assume  $ m > n^{\log (n)^{2}} $ then if Je\'{s}manowicz conjecture has an exceptional solution, we obtain
 \begin{equation*}
 \begin{split}
\log (m)^{2/3}< 7.482(\log(2m)+2.139)^2 & \left(1+\frac{70} {\log(2m)}\right)+\frac{31}{15 \log m} L^{\prime}\\ &+\frac{\left( \log 6.29 L^{\prime}  +0.7 L^{\prime^{2}} (\log(m)+70)) \right)}{\log m} , 
\end{split}
 \end{equation*}
 The inequality does not hold for $m>10^{22933}$ this proves the theorem \ref{main2}. 
 \qed
 
 \section{ $4 \pdiv m$} \label{section akhar}
 With the same notations as previous sections, assume $a=m^2-n^2, b=2mn$ and $c=m^2+n^2$  be primitive Pythagorean triple. In this section, we will prove that if $4 \pdiv m $ and the conditions of theorems \ref{main thereom} or \ref{main2} satisfy, then Je\'{s}manowicz' conjecture holds. For this purpose, we will show that if $4 \pdiv m$ and 
\begin{equation} \label{bakhshakhar}
\left( m^2-n^2 \right)^x + \left( 2mn \right) ^y = \left( m^2+n^2 \right)^z, 
\end{equation}
then either $y=1$ or all $x,y$ , and $z$ are even. 
\begin{lemma}
Assume $(x,y,z)$ is a solution of \eqref{bakhshakhar}. If $ m$ is even, then $x$ is even.
\end{lemma}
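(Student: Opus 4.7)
The plan is a straightforward $2$-adic congruence argument. Since $\gcd(m,n)=1$ and $m\not\equiv n\pmod 2$, the hypothesis $m$ even forces $n$ to be odd. I would first record the following residues modulo $4$:
\begin{itemize}
\item $a = m^{2}-n^{2} \equiv 0-1 \equiv -1 \pmod{4}$, because $n$ is odd and $m$ is even;
\item $c = m^{2}+n^{2} \equiv 0+1 \equiv 1 \pmod{4}$;
\item $b = 2mn$ satisfies $v_{2}(b)=1+v_{2}(m)\ge 2$, so $4\mid b$.
\end{itemize}

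Next, I would reduce equation \eqref{bakhshakhar} modulo $4$. Since $y\ge 1$ and $4\mid b$, the term $b^{y}$ vanishes mod $4$, and we are left with
\[
a^{x} \equiv c^{z} \pmod{4}.
\]
Substituting the residues computed above gives $(-1)^{x}\equiv 1\pmod{4}$, which forces $x$ to be even.

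There is no real obstacle here: the entire argument is elementary and relies only on the parity assumption on $m$ (together with the standing hypothesis $m\not\equiv n\pmod 2$), which guarantees both $4\mid b$ and $a\equiv -1\pmod 4$, $c\equiv 1\pmod 4$. Note that the stronger hypothesis $4\pdiv m$ is not actually needed for this particular lemma; any even $m$ suffices. The stronger divisibility assumption will presumably be invoked in the companion lemmas that establish the parity of $y$ and $z$, where a finer $2$-adic analysis is required.
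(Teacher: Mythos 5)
Your proof is correct and follows essentially the same route as the paper: the paper also reduces \eqref{bakhshakhar} modulo $4$, obtaining $(-n^2)^x\equiv (n^2)^z\pmod 4$ and concluding that $x$ is even since $n$ is odd. Your observation that $m$ even (rather than $4\pdiv m$) already suffices matches the paper's stated hypothesis for this lemma.
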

Considering the equation \eqref{bakhshakhar} modulo 4, we have $ (-n^2)^x \equiv (n^2)^z \pmod 4$. Since $n$ is odd, $x$ is even.
\begin{lemma} \label{quadratic}
Assume $(x,y,z)$ is a solution of \eqref{bakhshakhar}, then
\begin{enumerate}
\item If $ m+ n \equiv 5 \pmod 8$, then $y \equiv z \pmod 2$
\item If $ m+n \equiv 7 \pmod 8$, then $y$ is even.
\item If $m+n \equiv 3 \pmod 8$, then $z$ is even. 
\item If $m-n \equiv 5 \pmod 8$ then $y \equiv z \pmod 2$
\end{enumerate}
\end{lemma}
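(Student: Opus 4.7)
The plan is to attack all four items uniformly by reducing equation \eqref{bakhshakhar} modulo $m+n$ (for items (1)--(3)) or modulo $m-n$ (for item (4)), and then passing to Jacobi symbols. Since $4 \pdiv m$, the integer $n$ is odd, so both $m+n$ and $m-n$ are odd positive integers, and $\gcd(m, m\pm n)=\gcd(m,n)=1$. Reducing \eqref{bakhshakhar} modulo $m+n$ kills the $a^x$ term since $a=(m-n)(m+n)$; substituting $n\equiv -m \pmod{m+n}$ yields $b=2mn\equiv -2m^2$ and $c=m^2+n^2\equiv 2m^2$, so the equation collapses to
\begin{equation*}
(-2m^2)^y \equiv (2m^2)^z \pmod{m+n}.
\end{equation*}
Both sides are units modulo $m+n$, so I can apply the Jacobi symbol $\bigl(\tfrac{\cdot}{m+n}\bigr)$ and cancel $\bigl(\tfrac{m^2}{m+n}\bigr)=1$ to obtain the single identity
\begin{equation*}
\left(\frac{-1}{m+n}\right)^{y}\left(\frac{2}{m+n}\right)^{y-z}=1.
\end{equation*}

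From here items (1), (2), (3) reduce to reading off the standard values of $\bigl(\tfrac{-1}{N}\bigr)$ and $\bigl(\tfrac{2}{N}\bigr)$ in the residue classes $N\equiv 5,7,3\pmod 8$. In case (1) the two symbols are $(+,-)$, so the identity becomes $(-1)^{y-z}=1$, which is exactly $y\equiv z\pmod 2$. In case (2) they are $(-,+)$, giving $(-1)^y=1$, so $y$ is even. In case (3) they are $(-,-)$, and the two signs combine to leave $(-1)^z=1$, so $z$ is even.

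For item (4) I would run the same reduction but modulo $m-n$. Now $n\equiv m$, so $b\equiv 2m^2$ and $c\equiv 2m^2$; there is no minus sign, and the equation reduces to $(2m^2)^y\equiv (2m^2)^z\pmod{m-n}$. Taking Jacobi symbols then gives $\bigl(\tfrac{2}{m-n}\bigr)^{y-z}=1$, and when $m-n\equiv 5\pmod 8$ this symbol equals $-1$, which forces $y\equiv z\pmod 2$.

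There is no real obstacle; the entire lemma is quadratic-reciprocity bookkeeping. The only things I would verify carefully are that $2m^2$ is genuinely a unit modulo the odd integer $m\pm n$ (automatic from $\gcd(m,n)=1$ together with $m$ even) and that the signs combine cleanly, so that the exponents $y$ versus $y-z$ drop out in exactly the right way to give the stated parity conclusion in each of the four residue classes.
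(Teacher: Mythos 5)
Your proposal is correct and follows essentially the same route as the paper: reduce \eqref{bakhshakhar} modulo $m+n$ (resp.\ $m-n$) to kill the $a^x$ term, pass to Jacobi symbols to get $\left(\frac{-2}{m+n}\right)^y=\left(\frac{2}{m+n}\right)^z$ (resp.\ $\left(\frac{2}{m-n}\right)^y=\left(\frac{2}{m-n}\right)^z$), and read off the four cases from the supplementary laws for $\left(\frac{-1}{N}\right)$ and $\left(\frac{2}{N}\right)$. The only cosmetic difference is that you substitute $n\equiv-m$ where the paper substitutes $m\equiv-n$, which is immaterial once the square factor is cancelled.
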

\begin{proof}
Denote the Jacobi symbol by $ \left ( \frac{*}{*} \right) $. Consider the equation \eqref{bakhshakhar} modulo $m+n$. Then 
$ \left( \frac{-2n^2}{m+n} \right)^y= \left( \frac{-2n^2}{m+n} \right)^z \Rightarrow \left ( \frac{-2}{m+n} \right)^y= \left( \frac{2}{m+n}\right)^z.$ Similarly tacking the equation \eqref{bakhshakhar} modulo $m-n$, we have $ \left( \frac{2n^2}{m-n} \right)^y= \left( \frac{2n^2}{m+n} \right)^z \Rightarrow \left ( \frac{2}{m-n} \right)^y= \left( \frac{2}{m-n}\right)^z.$ The Lemma follows from quadratic reciprocity theorem. \qed
\end{proof}

\subsection{$4 \pdiv m$ and $ n \equiv 1 \pmod 8 $}
From part 1 of lemma \ref{quadratic}, $ y \equiv z \pmod 2$. Let $\left( \frac{*}{*}\right)_4$ denote the biquadratic residue symbol ( see \cite{Ire}, \cite{qua2}). Since $n \equiv 1 \pmod 4$ and $ m \equiv 0 \pmod 4$, $n-mi$ is an odd primary number.  From properties of the quartic character residue, we have 
$$ \left( \frac{i}{n-mi} \right)_4=1 \quad \quad  \left( \frac{-1}{n-mi} \right)_4=1 \quad  \quad \left( \frac{2}{n-mi} \right)_4=-1 $$
Let $m_1=m/4$ be an odd number. Consider the equation \eqref{bakhshakhar} modulo $n-mi$. Then 
 $$ \left( \frac{2n^2}{n-mi} \right)_4^x = \left( \frac{- (2m^2i)^y}{n-mi} \right)_4 $$
 From properties of biquadratic residue and biquadratic reciprocity, we have following equations:
 $$\left( \frac{2n^2}{n-mi} \right)_4= \left(\frac{2}{n-mi} \right)_4 \left( \left( \frac{n}{n-mi} \right)_4\right)^2 =-  \left( \left( \frac{n-mi}{n} \right)_4\right)^2 = -  \left( \left( \frac{mi}{n} \right)_4\right)^2 = -1   $$
\begin{equation*}
\begin{split} 
 \left( \frac{ (2m^2i)}{n-mi} \right)_4 = \left(\frac{2}{n-mi} \right)_4 \left( \frac{16m_1^2i}{n-mi} \right)_4=& - \left( \frac{m_1^2}{n-mi} \right)_4=- \left( \frac{n-mi}{m_1^2} \right)_4 \\
 = & - \left(\left( \frac{n-mi}{m_1} \right)_4\right)^2=- \left(\left( \frac{n}{m_1} \right)_4\right)^2=-1  
 \end{split}
 \end{equation*}
 Therefore, $(-1)^x=(-1)^y$ and if $x,y,z$ is a solution for \eqref{bakhshakhar}, where $4 \pdiv m$ and $n \equiv 1 \pmod8 $, then $x \equiv y \equiv z \equiv 0 \pmod 2. $

\subsection{$4 \mid m$ and $ n \equiv 3 \pmod 8 $}
From part 2 of lemma \ref{quadratic}, $ y $ is even. Since $ 4 \mid m $ and $n \equiv 3 \pmod 8 $, $ m^2 \equiv 0 \pmod {16} $ and $n^2 \equiv 9 \pmod {16} $. Consider the equation \eqref{bakhshakhar} modulo 16, we obtain $7^x \equiv 9^z \pmod {16}$. Therefore, $z$ is also even, and all $x$,$y$, and $z$ are even.

\subsection{ $4 \mid m$ and $ n \equiv 5 \pmod 8 $} 
 By \cite [lemma 9.5]{Miyasl} the equation \eqref{bakhshakhar} has no solution with $y=1$ when $ \frac{m}{n} >56$. Note that the condition $ \frac{m}{n} >56$ is much stronger than the conditions in theorems \ref{main thereom} and \ref{main2}, so we assume $ y > 1$. Again considering equation \eqref{bakhshakhar} modulo 16 with the assumption $y> 1$, we obtain $7^x \equiv 9^z \pmod{16}$, hence $z$ is even. We recall an intresting result of Miyazaki \cite{Miyf}.
\begin{proposition}
 Let $x, y, z$ be a solution of equation \eqref{bakhshakhar}. Assume that $x$ and $z$ are even. Then $X=x / 2$ and $Z=z/ 2$ are odd.
\end{proposition}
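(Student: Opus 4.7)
The plan is to split the argument by the parity of $y$. If $y$ is even, write $y=2Y$; then $a^{2X}+b^{2Y}=c^{2Z}$ makes $(a^X,b^Y,c^Z)$ a primitive Pythagorean triple and puts us in the setting of Section \ref{sec:1}. The result of Bennett \cite{Ben1} (no coprime solutions to $A^4+B^2=C^N$ with $N\geq 4$) immediately rules out $X$ even, since that would produce $(a^{X/2})^4+(b^Y)^2=c^{2Z}$ with $2Z\geq 4$; likewise the result of Cao \cite{Cao2} (no coprime solutions to $A^{2N}+B^2=C^4$ with $N>1$, $2\mid A$), applied with $A=b$ and $N=Y$, rules out $Z$ even. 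The degenerate case $(x,y,z)=(2,2,2)$ is of course immediate since then $X=Z=1$.

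If $y$ is odd, hence $y\geq 3$, I would exploit the $\mathbb{Z}$-factorization
\begin{equation*}
(c^Z-a^X)(c^Z+a^X)=b^y.
\end{equation*}
Since $a$ and $c$ are coprime odd integers, $\gcd(c^Z-a^X,\,c^Z+a^X)=2$. Using $4\pdiv m$, write $b=8m_1n$ with $m_1=m/4$ odd and coprime to $n$, so that $b^y=2^{3y}(m_1n)^y$. The coprime-factorization constraints together with the concentration of the $2$-adic part on a single side force (up to swapping the two factors)
\begin{equation*}
c^Z-a^X=2\,\delta_1^{\,y},\qquad c^Z+a^X=2^{3y-1}\,\delta_2^{\,y},
\end{equation*}
where $\delta_1\delta_2=m_1n$ and $\gcd(\delta_1,\delta_2)=1$. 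Adding and subtracting yields $c^Z=\delta_1^{\,y}+2^{3y-2}\delta_2^{\,y}$ and $a^X=\pm(2^{3y-2}\delta_2^{\,y}-\delta_1^{\,y})$.

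Now suppose for contradiction that $X$ is even; then $a^X=(a^{X/2})^2$ and the expression for $a^X$ rewrites as
\begin{equation*}
(a^{X/2})^2+\delta_1^{\,y}=2^{3y-2}\,\delta_2^{\,y},
\end{equation*}
a generalized Fermat equation of signature $(2,y,y)$ with $y\geq 3$. I would rule this out by a modular-method argument in the spirit of \cite{Ben1,Ben2}, or, in keeping with the elementary tone of Section \ref{section akhar}, via quadratic and biquadratic reciprocity applied to a well-chosen odd prime divisor of $m_1n$, along the lines of Lemma \ref{quadratic}. The assumption that $Z$ is even is treated identically: it produces the companion equation $(c^{Z/2})^2=\delta_1^{\,y}+2^{3y-2}\delta_2^{\,y}$ of the same signature, to be handled by the same machinery.

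The main obstacle is the non-uniformity of the resulting signature $(2,y,y)$ equations: because the coefficient $2^{3y-2}$ and the parameters $\delta_1,\delta_2$ vary both with $y$ and with the prime factorization of $m_1n$, no single off-the-shelf non-existence statement covers all cases. Producing the correct combination of modular and reciprocity inputs needed to close out both the even-$X$ and even-$Z$ branches, for every permissible splitting $\delta_1\delta_2=m_1n$, is the delicate step; my expectation is that the proof in \cite{Miyf} achieves this by leveraging the full arithmetic of the primitive Pythagorean triple $(a,b,c)$ in conjunction with the case distinctions on $n \pmod 8$ that already drive Subsections 5.1--5.3.
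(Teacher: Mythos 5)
The paper does not actually prove this proposition: it is quoted verbatim from Miyazaki \cite{Miyf} with no argument supplied, so there is no in-paper proof to compare against. Judged on its own merits, your even-$y$ branch is sound and uses exactly the tools the paper itself deploys in Section \ref{sec:1} (the theorems of \cite{Ben1} and \cite{Cao2}); you should, however, record explicitly that an even solution other than $(2,2,2)$ has $x,y>2$ (cf.\ \cite[Lemma 2.2]{Ma}), since this is what guarantees $N=z\geq 4$ in Bennett's theorem and $N=Y>1$ in Cao's theorem, neither of which applies otherwise.

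The genuine gap is the odd-$y$ branch, and it is not peripheral: the proposition is invoked in the subsection on $n\equiv 5\pmod 8$ at a point where the parity of $y$ is still unknown --- indeed the purpose of invoking it there is to \emph{deduce} that $y$ is even --- so the odd-$y$ case cannot be waved off. Your reduction to $c^Z=\delta_1^{y}+2^{3y-2}\delta_2^{y}$ and $a^X=\pm\left(2^{3y-2}\delta_2^{y}-\delta_1^{y}\right)$ is correct as far as it goes (and the value $y=1$, which you skip with ``hence $y\geq 3$,'' is easy to exclude since $c^Z+a^X\geq c+a=2m^2>2mn=b$), but the final step is a plan rather than a proof: you are left with a family of signature-$(2,y,y)$ equations whose coefficient $2^{3y-2}$ and parameters $\delta_1,\delta_2$ vary with $y$ and with the splitting of $m_1n$, and, as you concede, no off-the-shelf nonexistence theorem covers them. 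Elementary congruences do not close the case either: reducing $(a^{X/2})^2=\pm\left(2^{3y-2}\delta_2^{y}-\delta_1^{y}\right)$ or $(c^{Z/2})^2=\delta_1^{y}+2^{3y-2}\delta_2^{y}$ modulo $8$ only forces $\delta_1\equiv\pm 1\pmod 8$, a constraint rather than a contradiction. The argument therefore terminates in an appeal to ``the proof in \cite{Miyf},'' i.e., to the very statement being proved, and the decisive case remains open.
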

Since $x$ and $z$ are even we have :
$$ (2mn)^y= DE, $$
where
$$ D=\left(m^{2}+n^{2}\right)^{Z}+\left(m^{2}-n^{2}\right)^{X}, \quad E=\left(m^{2}+n^{2}\right)^{Z}-\left(m^{2}-n^{2}\right)^{X}. $$
Since $gcd(m,n)=1$ , $GCD(E,D)=2$ and $E \equiv 2 \pmod 4 $, so we obtain:
\begin{equation} \label{i}
\left(m^{2}+n^{2}\right)^{Z}-\left(m^{2}-n^{2}\right)^{X} = 2 m_1^y n_1^y,
\end{equation}
\begin{equation} \label{ii}
\left(m^{2}+n^{2}\right)^{Z}+\left(m^{2}-n^{2}\right)^{X} = 2^{(\alpha+1)y-1} m_2^y n_2^y,
\end{equation}
and 
\begin{equation} \label{iii}
\left(m^{2}+n^{2}\right)^{Z} = 2^{(\alpha+1)y-2} m_2^y n_2^y+ m_1^y n_1^y,
\end{equation}
where $\alpha \geq 2 $ is as defined in \ref{2alpha}, $m_1$ and $m_2 $ are odd and relatively prime and $m=2^{\alpha}m_1m_2$ and $n=n_1n_2$.

Let $p$ be a prime factor of $m_1$. Then considering equation \eqref{i} modulo $p$, we have:
$$ \left( n^2 \right )^Z+ \left( n^2 \right)^X \equiv 0 \pmod p \Rightarrow  n^{2(Z-X)} \equiv -1 \pmod p ,$$
Therefore, $n^{4 \frac{Z-X}{2}} \equiv -1 \pmod {p}$, so there exists an element of order 8 in $ \mathbb{Z}/ p \mathbb{Z}$ and $p \equiv 1 \pmod 8 $, hence $m_1 \equiv 1 \pmod 8 $. 

Next, let $p$ be a prime factor of $n_2$. Then tacking equation \eqref{ii} modulo $p$, we obtain
 $$ \left( m^2 \right )^Z+ \left( m^2 \right)^X \equiv 0 \pmod p \Rightarrow  m^{2(Z-X)} \equiv -1 \pmod p ,$$
by the same argument as above, we conclude that $n_2 \equiv 1 \pmod 8 $ and, therefore, $n_1 \equiv 5 \pmod 8 $.
Consider the equation \eqref{iii} modulo 8. Since $ \alpha \geq 2 $ we get:
$$ 1 \equiv \left (m_1n_1 \right) ^y \pmod 8, $$
therefore, $y$ is also even, and this completes the proof for this case.
  \subsection{$4 \pdiv m$ and $ n \equiv 7 \pmod 8 $} 
  In this case, from part 3 of lemma \ref{quadratic}, since $m +n \equiv 3 \pmod 5 $ $z$ is even, and from part 4 of lemma \ref{quadratic}, since $m-n \equiv 5 \pmod 8 $, $y$ is even. Hence, if $x,y,z$ is a solution for \eqref{bakhshakhar}, where $4 \pdiv m$ and $n \equiv 7 \pmod8 $, then $x \equiv y \equiv z \equiv 0 \pmod 2. $
To summarize, if $m \pdiv 4 $ and the pair $(n,m)$ satisfies the conditions of theorem \ref{main thereom} or \ref{main2}, then Je\'{s}manowicz' conjecture holds for the primitive Pythagorean triple $m^2-n^2,2mn,m^2+n^2.$


%
%


 
 \end{document}